\title{Vanishing of $L^{2}$-Betti numbers and failure of acylindrical
hyperbolicity of matrix groups over rings}
\author{Feng Ji}
\address{Infinitus, Nanyang Technological University, 50 Nanyang Ave, S2-B4b-05,
Singapore 639798.}
\email{jifeng@ntu.edu.sg}
\urladdr{}
\author{Shengkui Ye}
\address{Department of Mathematical Sciences, Xi'an Jiaotong-Liverpool University,
111 Ren Ai Road, Suzhou, Jiangsu, China 215123.}
\email{Shengkui.Ye@xjtlu.edu.cn}
\urladdr{https://yeshengkui.wordpress.com/}
\newtheorem{theorem}{Theorem}[section]
\newtheorem{corollary}[theorem]{Corollary}
\newtheorem{definition}[theorem]{Definition}
\newtheorem{example}[theorem]{Example}
\newtheorem{lemma}[theorem]{Lemma}
\newtheorem{proposition}[theorem]{Proposition}
\newtheorem{remark}[theorem]{Remark}
\begin{document}

\begin{abstract}
Let $R$ be an infinite commutative ring with identity and $n\geq 2$ be an
integer. We prove that for each integer $i=0,1,\cdots ,n-2,$ the $L^{2}$%
-Betti number $b_{i}^{(2)}(G)=0,$ $\ $when $G=\mathrm{GL}_{n}(R)$ the
general linear group, $\mathrm{SL}_{n}(R)$ the special linear group, $%
E_{n}(R)$ the group generated by elementary matrices. When $R$ is an
infinite principal ideal domain, similar results are obtained for $\mathrm{Sp%
}_{2n}(R)$ the symplectic group, $\mathrm{ESp}_{2n}(R)$ the elementary
symplectic group, $\mathrm{O}(n,n)(R)$ the split orthogonal group or $%
\mathrm{EO}(n,n)(R)$ the elementary orthogonal group. Furthermore, we prove
that $G$ is not acylindrically hyperbolic if $n\geq 4$. We also prove
similar results for a class of noncommutative rings. The proofs are based on
a notion of $n$-rigid rings.
\end{abstract}

\maketitle

\section{Introduction}

In this article, we study the $s$-normality of subgroups of matrix groups
over rings together with two applications. Firstly, the low-dimensional $%
L^{2}$-Betti numbers of matrix groups are proved to be zero. Secondly, the
matrix groups are proved to be not acylindrically hyperbolic in the sense of
Dahmani--Guirardel--Osin \cite{dgo} and Osin \cite{Os}. Let us briefly
review the relevant background.

Let $G$ be a discrete group. Denote by
\begin{equation*}
l^{2}(G)=\{f:G\rightarrow \mathbb{C}\mid \sum_{g\in G}\Vert f(g)\Vert
^{2}<+\infty \}
\end{equation*}%
the Hilbert space with inner product $\langle f_{1},f_{2}\rangle =\sum_{x\in
G}f_{1}(x)\overline{f_{2}(x)}$. Let $B(l^{2}(G))$ be the set of all bounded
linear operators on the Hilbert space $l^{2}(G).$ By definition, the group
von Neumann algebra $\mathcal{N}G$ is the completion of the complex group
ring $\mathbb{C}[G]$ in $B(l^{2}(G))$ with respect to the weak operator
topology. There is a continuous, additive von Neumann dimension that assigns
to every right $\mathcal{N}G$-module $M$ a value $\mathrm{dim}_{\mathcal{N}%
G}(M)\in \lbrack 0,\infty ]$ (see Definition 6.20 of \cite{lu}). For a group
$G,$ let $EG$ be the universal covering space of its classifying space $%
\mathrm{B}G.$ Denote by $C_{\ast }^{\text{sing}}(EG)$ the singular chain
complex of $EG$ with the induced $\mathbb{Z}G$-structure. The $L^{2}$%
-homology is the singular homology $H_{i}^{G}(EG;\mathcal{N}G)$ with
coefficients $\mathcal{N}G$, \textsl{i.e.} the homology of the $\mathcal{N}G$%
-chain complex $\mathcal{N}G\bigotimes_{\mathbb{Z}G}C_{\ast }^{\text{sing}%
}(EG).$ The $i$-th $L^{2}$-Betti number of $G$ is defined by%
\begin{equation*}
b_{i}^{(2)}(G):=\mathrm{dim}_{\mathcal{N}G}(H_{i}^{G}(EG;\mathcal{N}G))\in
\lbrack 0,\infty ].
\end{equation*}%
The $L^{2}$-homology and $L^{2}$-Betti numbers are important invariants of
spaces and groups. They have many applications to geometry and $K$-theory.
For more details, see the book \cite{lu}.

It has been proved that the $L^{2}$-Betti numbers are (almost) zero for
several class of groups, including amenable groups, Thompson's group (cf.
\cite{lu}, Theorem 7.20), Baumslag-Solitar group (cf. \cite{dl,bf}), mapping
class group of a closed surface with genus $g\geq 2$ except $b_{3g-3}^{(2)}$
(cf. \cite{ki}, Corollary D.15) and so on (for more information, see \cite%
{lu}, Chapter 7). Let $R$ be an associative ring with identity and $n\geq 2$
be an integer. The general linear group $\mathrm{GL}_{n}(R)$ is the group of
all $n\times n$ invertible matrices with entries in $R$. For an element $%
r\in R$ and any integers $i,j$ such that $1\leq i\neq j\leq n,$ denote by $%
e_{ij}(r)$ the elementary $n\times n$ matrix with $1s$ in the diagonal
positions and $r$ in the $(i,j)$-th position and zeros elsewhere. The group $%
E_{n}(R)$ is generated by all such $e_{ij}(r),$\textsl{\ i.e. }%
\begin{equation*}
E_{n}(R)=\langle e_{ij}(r)|1\leq i\neq j\leq n,r\in R\rangle .
\end{equation*}%
When $R$ is commutative, we define the special linear group $\mathrm{SL}%
_{n}(R)$ as the subgroup of $\mathrm{GL}_{n}(R)$ consisting of matrices with
determinants $1$. For example in the case $R=\mathbb{Z},$ the integers, we
have that $\mathrm{SL}_{n}(\mathbb{Z})=E_{n}(R).$ The groups $\mathrm{GL}%
_{n}(R)$ and $E_{n}(R)$ are important in algebraic $K$-theory.

In this article, we prove the vanishing of lower $L^{2}$-Betti numbers for
matrix groups over a large class of rings, including all infinite
commutative rings. For this, we introduce the notion of $n$-rigid rings (for
details, see Definition \ref{de}). Examples of $n$-rigid (for any $n\geq 1$)
rings contain the following (cf. Section \ref{sec}):

\begin{itemize}
\item infinite integral domains;

\item $\mathbb{Z}$-torsionfree infinite noetherian ring (may be
non-commutative);

\item infinite commutative noetherian rings (moreover, any infinite
commutative ring is $2$-rigid);

\item Finite-dimensional algebras over $n$-rigid rings.
\end{itemize}

We prove the following results.

\begin{theorem}
\label{main}Suppose $n\geq 2.$ Let $R$ be an infinite $(n-1)$-rigid ring and
$E_{n}(R)$ the group generated by elementary matrices$.$ For each $i\in
\{0,\cdots ,n-2\},$ the $L^{2}$-Betti number $b_{i}^{(2)}(E_{n}(R))=0.$
\end{theorem}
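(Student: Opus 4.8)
The plan is to exhibit inside $E_{n}(R)$ a subgroup $H$ that is \emph{$s$-normal} — meaning $H\cap gHg^{-1}$ is infinite for every $g\in E_{n}(R)$ — and whose lower $L^{2}$-Betti numbers vanish, and then to propagate the vanishing to $E_{n}(R)$ via the standard inheritance principle: if $H\le G$ is $s$-normal and $b_{i}^{(2)}(H)=0$ for all $i\le p$, then $b_{i}^{(2)}(G)=0$ for all $i\le p$. So the work splits into choosing $H$, checking the vanishing for $H$, and — the genuinely hard step — verifying that $H$ is $s$-normal; it is this last step that requires $R$ to be infinite and $(n-1)$-rigid in the sense of Definition~\ref{de}.

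The case $n=2$ is immediate and handled separately: $b_{0}^{(2)}(E_{2}(R))=0$ just says $E_{2}(R)$ is infinite, and $E_{2}(R)\supseteq\{e_{12}(r):r\in R\}\cong (R,+)$, which is infinite because $R$ is; ``$1$-rigidity'' plays no role. So assume $n\ge 3$, let $E_{n}(R)$ act on $R^{n}$ by left multiplication, and take $H=\mathrm{Stab}_{E_{n}(R)}(e_{n})$, the stabiliser of the last standard basis vector.

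To obtain the vanishing for $H$, I would look at the subgroup $A=\langle e_{nj}(r):1\le j\le n-1,\ r\in R\rangle$. The Steinberg relations show $A\cong (R,+)^{n-1}$ is abelian, and every $e_{nj}(r)$ fixes $e_{n}$, so $A\le H$. Moreover $A$ is normal in $H$: if $s\in H$ then the last column of $s^{-1}$ is again $e_{n}$, and a short computation with $s\,e_{nj}(r)\,s^{-1}$ then shows the conjugate lies in $A$. Since $R$ is infinite, $A$ is an infinite amenable (indeed abelian) normal subgroup of $H$, so $b_{i}^{(2)}(H)=0$ for \emph{all} $i$ by the standard vanishing criterion for groups with an infinite amenable normal subgroup (see \cite{lu}); in particular $b_{i}^{(2)}(H)=0$ for $i\le n-2$.

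The main obstacle is the $s$-normality of $H$ in $E_{n}(R)$. For $g\in E_{n}(R)$ one has $gHg^{-1}=\mathrm{Stab}_{E_{n}(R)}(ge_{n})$, hence $H\cap gHg^{-1}=\mathrm{Stab}_{E_{n}(R)}(\{e_{n},ge_{n}\})$, the group of elementary matrices fixing both of the unimodular vectors $e_{n}$ and $ge_{n}$ of $R^{n}$. The task is to show this joint stabiliser is infinite for every $g$ — in particular in the degenerate situations where $ge_{n}$ is a scalar multiple of $e_{n}$ or where $\{e_{n},ge_{n}\}$ does not extend to a basis of $R^{n}$ — and this is precisely the behaviour that the notion of an $(n-1)$-rigid ring is tailored to guarantee, its verification for the classes of rings listed above (infinite integral domains, $\mathbb{Z}$-torsionfree infinite noetherian rings, and so on) being the content of the earlier sections; the same mechanism applies \emph{mutatis mutandis} to $\mathrm{GL}_{n}(R)$, $\mathrm{SL}_{n}(R)$ and the symplectic and orthogonal families. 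Granting it, $H$ is $s$-normal, and the inheritance principle applied with $p=n-2$ gives $b_{i}^{(2)}(E_{n}(R))=0$ for all $i\in\{0,\dots,n-2\}$, as required.
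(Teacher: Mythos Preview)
Your inheritance principle is misstated, and this is the genuine gap. There is no ``standard'' result saying that if $H\le G$ is merely $s$-normal (i.e.\ $1$-step $s$-normal) and $b_{i}^{(2)}(H)=0$ for $i\le p$, then $b_{i}^{(2)}(G)=0$ for $i\le p$. What Bader--Furman--Sauer actually prove (Lemma~\ref{lem1}) has stronger hypotheses: to conclude $b_{i}^{(2)}(G)=0$ for $i\le p$ one must verify $b_{i}^{(2)}(H^{\omega})=0$ for \emph{every} $(k{+}1)$-tuple $\omega\in G^{k+1}$ and every pair $i,k\ge 0$ with $i+k\le p$, where $H^{\omega}=\bigcap_{j=0}^{k} g_{j}Hg_{j}^{-1}$. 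For $p=1$ this collapses to $s$-normality of $H$ plus $b_{0}^{(2)}(H)=b_{1}^{(2)}(H)=0$, matching your statement; but already for $p=2$ one also needs $b_{1}^{(2)}(H\cap gHg^{-1})=0$ for every $g$, and in general one needs information about $(p{+}1)$-fold intersections. Your argument supplies only $b_{i}^{(2)}(H)=0$ and infiniteness of $H\cap gHg^{-1}$; it says nothing about the higher $L^{2}$-Betti numbers of the intersections.

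This is precisely why the paper does more. Lemma~\ref{ke} shows that for $R$ $(n-1)$-rigid the subgroup $Q$ (your $H$, after relabelling $e_{n}\leftrightarrow e_{1}$) is $(n-2)$-step $s$-normal, and---crucially---Lemma~\ref{new} shows that each multi-fold intersection $Q\cap\bigcap_{i=1}^{k} g_{i}Qg_{i}^{-1}$ (for $k\le n-2$) contains an infinite abelian \emph{normal} subgroup $T$, built from transvections $T_{\phi}$ with $\phi\in\mathrm{Hom}_{R}(R^{n-1},R)$ annihilating the $U$-components of $g_{1}e_{1},\dots,g_{k}e_{1}$; the $(n-1)$-rigidity of $R$ (via Proposition~\ref{prop}) is exactly what guarantees infinitely many such $\phi$. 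Hence every relevant $H^{\omega}$ has \emph{all} $L^{2}$-Betti numbers zero, and Lemma~\ref{lem1} applies with $p=n-2$. Note in particular that mere $s$-normality of $H$ needs only $2$-rigidity (Lemma~\ref{ke} with $k=2$); the full strength of $(n-1)$-rigidity is consumed by the $(n-1)$-fold intersections, not by the single-conjugate case you focus on. Your outline thus correctly identifies the subgroup and its abelian normal piece, but misidentifies both the form of the inheritance principle and the role of the rigidity hypothesis.
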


Since $b_{1}^{(2)}(E_{2}(\mathbb{Z}))\neq 0,$ the above result does not hold
for $i=n-1$ in general.

\begin{corollary}
\label{cor}\bigskip Let $R$ be any infinite commutative ring and $n\geq 2$.
For each $i\in \{0,\cdots ,n-2\},$ the $L^{2}$-Betti number
\begin{equation*}
b_{i}^{(2)}(\mathrm{GL}_{n}(R))=b_{i}^{(2)}(\mathrm{SL}%
_{n}(R))=b_{i}^{(2)}(E_{n}(R))=0.
\end{equation*}
\end{corollary}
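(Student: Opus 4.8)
The plan is to deduce all three vanishing statements from Theorem \ref{main}, first reducing $\mathrm{GL}_n(R)$ and $\mathrm{SL}_n(R)$ to $E_n(R)$, and then establishing the statement for $E_n(R)$ over an arbitrary infinite commutative ring.

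\emph{Step 1: from $E_n(R)$ to $\mathrm{SL}_n(R)$ and $\mathrm{GL}_n(R)$.} Assume $b_i^{(2)}(E_n(R))=0$ for $i\le n-2$. If $n=2$, only $i=0$ occurs, and since $E_2(R)\supseteq\{e_{12}(r):r\in R\}\cong(R,+)$ is infinite, all three groups are infinite and $b_0^{(2)}$ of each vanishes. If $n\ge 3$, Suslin's normality theorem gives that $E_n(R)$ is normal in $\mathrm{GL}_n(R)$, hence also in $\mathrm{SL}_n(R)$, so $E_n(R)$ is an infinite normal --- in particular $s$-normal --- subgroup of both. Invoking the standard principle that the $L^2$-Betti numbers of a group vanish in all degrees $\le d$ whenever those of an infinite $s$-normal subgroup do --- via the Hochschild--Serre spectral sequence in $L^2$-homology together with additivity of von Neumann dimension --- one obtains $b_i^{(2)}(\mathrm{SL}_n(R))=b_i^{(2)}(\mathrm{GL}_n(R))=0$ for $i\le n-2$.

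\emph{Step 2: the case of $E_n(R)$.} It remains to prove $b_i^{(2)}(E_n(R))=0$ for $i\le n-2$, with $R$ an arbitrary infinite commutative ring. If $R$ is Noetherian it is $(n-1)$-rigid for every $n$ by the examples of Section \ref{sec}, so Theorem \ref{main} applies directly; and since every infinite commutative ring is $2$-rigid, Theorem \ref{main} already settles the cases $n\le 3$ without further hypothesis. For the remaining case --- $R$ not Noetherian and $n\ge 4$ --- I would argue as follows. Write $E_n(R)$ as the directed union $\bigcup_S E_n(S)$ over the finitely generated subrings $S\subseteq R$, each of which is Noetherian by the Hilbert basis theorem. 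If every such $S$ is finite, then $E_n(R)$ is locally finite, hence amenable; being infinite, all its $L^2$-Betti numbers vanish. Otherwise $R$ contains an infinite finitely generated subring $S_0$; restricting to finitely generated $S\supseteq S_0$, each such $S$ is infinite and Noetherian, hence $(n-1)$-rigid, so $b_i^{(2)}(E_n(S))=0$ for $i\le n-2$ by Theorem \ref{main}, and it remains to transfer this vanishing to $E_n(R)=\bigcup_S E_n(S)$.

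\emph{The main obstacle} is precisely this last transfer. It should rest on the assertion --- the technical heart of the paper --- that for finitely generated subrings $S\subseteq S'$ the inclusion $E_n(S)\hookrightarrow E_n(S')$ is $s$-normal, this in turn being a consequence of the $(n-1)$-rigidity of $S'$. Granting it, $E_n(S)$ is an infinite $s$-normal subgroup of $E_n(R)$, and the principle of Step 1 completes the proof. The passage through finitely generated (hence Noetherian) subrings is unavoidable here because Section \ref{sec} only guarantees $2$-rigidity for a general infinite commutative ring, which is insufficient to feed Theorem \ref{main} directly when $n\ge 4$.
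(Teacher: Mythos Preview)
Your Step~1 and the Noetherian subcase of Step~2 match the paper's argument. The gap is in the colimit transfer. The paper does \emph{not} argue via $s$-normality of $E_n(S)$ in $E_n(R)$; instead it invokes L\"uck's semicontinuity of $L^2$-Betti numbers under directed unions (\cite{lu}, Theorem~7.2(3)): since $E_n(R)=\bigcup_S E_n(S)$ over the finitely generated (hence Noetherian) subrings $S$, and since $b_i^{(2)}(E_n(S))=0$ for $1\leq i\leq n-2$ in both the finite and the infinite case, one obtains $b_i^{(2)}(E_n(R))=0$ directly from the colimit. No $s$-normality between the various $E_n(S)$ enters at all.

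Your proposed route has two separate problems. First, the ``technical heart of the paper'' is the $(n-2)$-step $s$-normality of the parabolic subgroup $Q$ inside $E_n(R)$ (Lemma~\ref{ke}), not of $E_n(S)$ inside $E_n(S')$ for subrings $S\subseteq S'$; the latter is neither stated nor proved, and the transvection argument of Lemma~\ref{ke} does not adapt to it in any evident way. Second, even granting that $E_n(S)$ is $s$-normal in $E_n(R)$, this alone would not transfer the vanishing of $b_i^{(2)}$ for $i\geq 2$: Lemma~\ref{lem1} requires $b_i^{(2)}(H^\omega)=0$ for \emph{all} multi-intersections $H^\omega=\bigcap_{j=0}^{k} g_jHg_j^{-1}$ with $i+k\leq n-2$, not merely for $H=E_n(S)$ itself. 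Your ``principle of Step~1'' works there only because $E_n(R)$ is genuinely \emph{normal} in $\mathrm{GL}_n(R)$, so that Lemma~\ref{lem2}(ii) applies; for this purpose normality is strictly stronger than $s$-normality.
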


Let $\mathrm{SL}_{n}(R)$ be a lattice in a semisimple Lie group, \textsl{e.g.%
} when $R=\mathbb{Z}$ or a subring of algebraic integers. It follows from
results of Borel, which rely on global analysis on the associated symmetric
space, that the $L^{2}$-Betti numbers of $\mathrm{SL}_{n}(R)$ vanish except
possibly in the middle dimension of the symmetric space (cf. \cite{Bo,O}).
In particular, all the $L^{2}$-Betti numbers of $\mathrm{SL}_{n}(\mathbb{Z})$
$(n\geq 3)$ are zero (cf. \cite{Ec}, Example 2.5). For any infinite integral
domain $R$ and any $i\in \{0,\cdots ,n-2\},$ Bader-Furman-Sauer \cite{bf}
proves that the $L^{2}$-Betti number $b_{i}^{(2)}(\mathrm{SL}_{n}(R))=0.$ M.
Ershov and A. Jaikin-Zapirain \cite{ej} prove that the noncommutative
universal lattices $E_{n}(\mathbb{Z}\langle x_{1},\cdots ,x_{k}\rangle )$
(and therefore $E_{n}(R)$ for any finitely generated associative ring $R$)
has Kazhdan's property (\textrm{T}) for $n\geq 3.$ This implies that for any
finitely generated associative ring $R,$ the first $L^{2}$-Betti number of $%
E_{n}(R)$ vanishes (cf. \cite{bv}).

We consider more matrix groups as follows. Let $R$ be a commutative ring
with identity. The symplectic group is defined as
\begin{equation*}
\mathrm{Sp}_{2n}(R)=\{A\in \mathrm{GL}_{2n}(R)|\,\ A^{T}\varphi
_{n}A=\varphi _{n}\},
\end{equation*}%
where $A^{T}$ is the transpose of $A$ and
\begin{equation*}
\varphi _{n}=\left(
\begin{array}{cc}
0 & I_{n} \\
-I_{n} & 0%
\end{array}%
\right) .
\end{equation*}%
Similarly, the split orthogonal group is defined as
\begin{equation*}
\mathrm{O}(n,n)(R)=\{A\in \mathrm{GL}_{2n}(R)|\,\ A^{T}\psi _{n}A=\psi _{n}\}
\end{equation*}%
where%
\begin{equation*}
\psi _{n}=\left(
\begin{array}{cc}
0 & I_{n} \\
I_{n} & 0%
\end{array}%
\right) .
\end{equation*}

For symplectic and orthogonal groups, we obtain the following.

\begin{theorem}
\label{main2}Let $R$ be an infinite principal ideal domain (PID) and $%
\mathrm{Sp}_{2n}(R)$ the symplectic group with its elementary subgroup $%
\mathrm{ESp}_{2n}(R)$ (resp. $\mathrm{O}(n,n)(R)$ the orthogonal group and
its elementary subgroup $\mathrm{EO}(n,n)(R)$)$.$ We have the following.

\begin{enumerate}
\item[(i)] For each $i=0,\cdots ,n-2$ $(n\geq 2),$ the $L^{2}$-Betti number
\begin{equation*}
b_{i}^{(2)}(\mathrm{Sp}_{2n}(R))=b_{i}^{(2)}(\mathrm{ESp}_{2n}(R))=0.
\end{equation*}

\item[(ii)] For each $i=0,\cdots ,n-2$ $(n\geq 2),$ the $L^{2}$-Betti number
\begin{equation*}
b_{i}^{(2)}(\mathrm{O}(n,n)(R))=b_{i}^{(2)}(\mathrm{EO}(n,n)(R))=0.
\end{equation*}
\end{enumerate}
\end{theorem}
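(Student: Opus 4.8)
The plan is to reduce both statements to Theorem \ref{main} by exhibiting, inside each of $\mathrm{Sp}_{2n}(R)$ and $\mathrm{O}(n,n)(R)$, a copy of $E_{n}(R)$ (or a closely related elementary-type subgroup) sitting as an $s$-normal subgroup of a suitable subgroup, and then invoking the $s$-normality machinery that underlies the proof of Theorem \ref{main} together with standard inheritance properties of vanishing $L^{2}$-Betti numbers. Concretely, first I would recall that for an infinite PID $R$ the elementary subgroups $\mathrm{ESp}_{2n}(R)$ and $\mathrm{EO}(n,n)(R)$ are generated by symplectic (resp. orthogonal) elementary matrices, and that among these generators one finds a "Levi" copy of $\mathrm{GL}_{n}(R)$ acting on a Lagrangian (resp. maximal isotropic) summand, with the remaining root subgroups forming an abelian (in the symplectic case, a Heisenberg-type) unipotent radical $U$. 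This gives a decomposition in which $E_{n}(R)\le \mathrm{GL}_n(R)$ is available as a large subgroup.

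The key steps, in order: (1) Identify the relevant root subgroups. For $\mathrm{ESp}_{2n}(R)$, the short-root elementary matrices $e_{i,n+j}(r)+e_{j,n+i}(r)$ and $e_{n+i,j}(r)+e_{n+j,i}(r)$ together with the $\mathrm{GL}_n$-type generators $e_{ij}(r)-e_{n+j,n+i}(r)$ generate the group; similarly for $\mathrm{EO}(n,n)(R)$ one uses $e_{i,n+j}(r)-e_{j,n+i}(r)$ and the $\mathrm{GL}_n$-block. (2) Show that the subgroup $H$ generated by the $\mathrm{GL}_n$-block is isomorphic to $E_n(R)$ (or to $\mathrm{SL}_n(R)$/$\mathrm{GL}_n(R)$, for which the corollary to Theorem \ref{main} already gives vanishing), and that $H$ normalizes each unipotent root subgroup. (3) Prove that $E_n(R)$ (hence $H$) is $s$-normal — or at least that the pair satisfies the hypotheses used in proving Theorem \ref{main} — inside the parabolic-type subgroup $P = H\ltimes U$, and that $U$, being built from copies of the additive group $(R,+)$ which is infinite, has the property that $P$ and $H$ have the same low $L^2$-Betti numbers (this is where the $n$-rigidity / infiniteness of $R$ enters, exactly as in the base theorem). (4) Finally, run the same argument that promotes vanishing from $P$ to all of $\mathrm{ESp}_{2n}(R)$ and then to $\mathrm{Sp}_{2n}(R)$: one shows $\mathrm{ESp}_{2n}(R)$ is generated by conjugates of $P$ in a way that makes $P$ (or $E_n(R)$ itself) $s$-normal in $\mathrm{ESp}_{2n}(R)$, and uses that $\mathrm{ESp}_{2n}(R)\trianglelefteq \mathrm{Sp}_{2n}(R)$ with the relevant index/commensurability controlled since $R$ is a PID (so $\mathrm{Sp}_{2n}(R)=\mathrm{ESp}_{2n}(R)$, and likewise in the orthogonal case up to the split form being Euclidean-generated). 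The index range $i\le n-2$ is dictated by the size of the $\mathrm{GL}_n$-block, matching Theorem \ref{main}.

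The main obstacle I expect is step (3)–(4): verifying $s$-normality of the copy of $E_n(R)$ inside the full elementary symplectic/orthogonal group, rather than merely inside the parabolic $P$. In the linear case this is presumably handled by an explicit commutator computation showing every elementary symplectic generator has a power (or a suitable conjugate) landing in $E_n(R)\cap gE_n(R)g^{-1}$; the symplectic and orthogonal Steinberg relations are more intricate — in particular the short/long root interactions and, for $\mathrm{Sp}$, the $2$-torsion phenomena — so the bookkeeping of which commutators of root elements land back in the $\mathrm{GL}_n$-block is the delicate point. A secondary subtlety is that for $\mathrm{EO}(n,n)$ with $n$ small the maximal isotropic subspace has dimension $n$, so the embedded general linear group is $\mathrm{GL}_n(R)$ and one must be careful that the hypotheses of Theorem \ref{main} (namely $(n-1)$-rigidity, which for a PID is automatic) are genuinely available for that block; since every infinite PID is an infinite integral domain and hence $m$-rigid for all $m\ge 1$, this causes no real trouble, but it must be stated. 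Once $s$-normality is in hand, the passage through the $s$-normal-subgroup argument for $L^2$-Betti numbers (as already used to prove Theorem \ref{main}) is formal.
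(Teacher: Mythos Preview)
Your route is genuinely different from the paper's. The paper does \emph{not} reduce to Theorem~\ref{main} via a $\mathrm{GL}_n$-Levi block; instead it works with the stabilizer $Q$ of the single isotropic vector $e_1$ (the ``smallest'' parabolic, not the Siegel one) and runs an induction on $n$ \emph{within} the symplectic/orthogonal family. For arbitrary $g_0=I,g_1,\dots,g_k\in G$ with $k\le n-2$ it sets $K=\langle g_0e_1,\dots,g_ke_1\rangle$ and $C=K^{\perp}$ with respect to the form, and then constructs inside $\bigcap_i g_iQg_i^{-1}$ the normal subgroup $T$ generated by Eichler transvections $\tau(u,v)$, $\tau_{v,r}$ with $u,v\in C$ isotropic and mutually orthogonal. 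Over a PID both $K$ and $C$ are free, and a dichotomy on $K\cap C$ finishes: if $K\cap C=0$ then the form is nondegenerate on $C$ and $T$ is an elementary symplectic/orthogonal group of smaller rank, so induction applies; if $K\cap C\neq 0$ one gets an infinite abelian normal subgroup (in the center of the intersection). Either way the hypotheses of the Bader--Furman--Sauer lemma hold. The PID hypothesis is used precisely here, to guarantee freeness of $K$ and $C$.

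The gap in your plan is the one you yourself flag, but it is more serious than a bookkeeping issue. Lemma~\ref{lem1} does not merely ask that $H$ be $(n-2)$-step $s$-normal; it asks that $b_i^{(2)}(H^{\omega})=0$ for \emph{every} tuple $\omega$ with $i+|\omega|\le n-1$. So you must identify, for arbitrary $g_1,\dots,g_k\in\mathrm{ESp}_{2n}(R)$, the intersection of $k+1$ conjugates of your $E_n(R)$ (or of $P$) and prove vanishing in the correct range. Steinberg/commutator relations let you move individual root elements around, which is adequate for $1$-step $s$-normality, but they give no handle on a $k$-fold intersection for general $g_i$: the conjugates $g_iPg_i^{-1}$ stabilize $k+1$ unrelated Lagrangians, and nothing in your outline produces elements fixing all of them simultaneously. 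The paper's use of transvections supported on $C=K^{\perp}$ is exactly the missing geometric mechanism---it manufactures elements in the full intersection from the form, not from commutator identities---and it is hard to see how to replace it by an argument that only references the $\mathrm{GL}_n$-block. Your step~(4) inherits the same problem: showing $P$ is $s$-normal in $G$ is not enough, and promoting vanishing from $P$ to $G$ again requires controlling all $P^{\omega}$.
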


\bigskip

The proofs of Theorem \ref{main} and Theorem \ref{main2} are based on a
study of the notion of weak normality of particular subgroups in matrix
groups, introduced in \cite{bf} and \cite{pt}. We present another
application of the weak normality of subgroups in matrix groups as follows.

Acylindrically hyperbolic groups are defined by Dahmani--Guirardel--Osin
\cite{dgo} and Osin \cite{Os}. Let $G$ be a group. An isometric $G$-action
on a metric space $S$ is said to be acylindrical if for every $\varepsilon
>0 $, there exist $R,N>0$ such that for every two points $x,y\in S$ with $%
d(x,y)\geq R$, there are at most $N$ elements $g\in G$ which satisfy $%
d(x,gx)\leq \varepsilon $ and $d(y,gy)\leq \varepsilon $. A $G$-action by
isometries on a hyperbolic geodesic space $S$ is said to be elementary if
the limit set of $G$ on the Gromov boundary $\partial S$ contains at most 2
points. A group $G$ is called \emph{acylindrically hyperbolic} if $G$ admits
a non-elementary acylindrical action by isometries on a (Gromov-$\delta $)
hyperbolic geodesic space. The class of acylindrically hyperbolic groups
includes non-elementary hyperbolic and relatively hyperbolic groups, mapping
class groups of closed surface $\Sigma _{g}$ of genus $g\geq 1,$ outer
automorphism group $\mathrm{Out}(F_{n})$ $(n\geq 2)$ of free groups,
directly indecomposable right angled Artin groups, 1-relator groups with at
least 3 generators, most 3-manifold groups, and many other examples.

Although there are many analogies among matrix groups, mapping class groups
and outer automorphism groups of free groups, we prove that they are
different on acylindrical hyperbolicity, as follows.

\begin{theorem}
\label{main3}Suppose that $n$ is an integer.

\begin{enumerate}
\item[(i)] Let $R$ be a $2$-rigid (\textsl{eg.} commutative) ring. The group
$E_{n}(R)$ $(n\geq 3)$ is not acylindrically hyperbolic.

\item[(ii)] Let $R$ be a commutative ring. The group $G$ is not
acylindrically hyperbolic, if $G=\mathrm{GL}_{n}(R)$ $(n\geq 3)$ the general
linear group, $\mathrm{SL}_{n}(R)$ $(n\geq 3)$ the special linear group, $%
\mathrm{Sp}_{2n}(R)$ $(n\geq 2)$ the symplectic group, $\mathrm{ESp}_{2n}(R)$
$(n\geq 2)$ the elementary symplectic group, $\mathrm{O}(n,n)(R)$ $(n\geq 4)$
the orthogonal group, or $\mathrm{EO}(n,n)(R)$ $(n\geq 4)$ the elementary
orthogonal group.
\end{enumerate}
\end{theorem}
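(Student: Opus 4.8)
The plan is to exploit the known fact (due to Osin, and refined by many authors) that acylindrical hyperbolicity is obstructed by the presence of a suitable ``large'' $s$-normal (weakly normal) subgroup that is itself not ``visibly hyperbolic-like.'' More precisely, I would use the criterion: if $H\leq G$ is $s$-normal (i.e. $H\cap gHg^{-1}$ is infinite for every $g\in G$) and $H$ is not acylindrically hyperbolic, then $G$ is not acylindrically hyperbolic either — equivalently, in any acylindrical action of $G$ on a hyperbolic space, an $s$-normal infinite subgroup cannot consist of elliptic elements unless $G$ itself is elementary or virtually cyclic on that space. The engine of the argument will be the $s$-normality results for embedded copies of smaller-rank matrix groups inside $E_n(R)$, $\mathrm{GL}_n(R)$, etc., which the paper has already set up for the $L^2$-Betti number arguments (the ``weak normality of particular subgroups in matrix groups'' referenced just before the statement).

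The key steps, in order, would be: (1) For part (i), inside $E_n(R)$ with $n\geq 3$, identify a subgroup isomorphic to a root subgroup or a product of commuting root subgroups — concretely, the abelian subgroup $U = \{e_{1n}(r) : r\in R\}\cong (R,+)$, or better a copy of $E_{n-1}(R)$ sitting in the top-left block — and show it is $s$-normal in $E_n(R)$ using the $2$-rigidity hypothesis and the commutator relations $[e_{ij}(r),e_{jk}(s)] = e_{ik}(rs)$; the point of $2$-rigidity is exactly to guarantee that conjugates of this subgroup intersect it in an infinite subgroup. (2) Observe that such a subgroup $H$ (being abelian, or nilpotent, or a smaller elementary group that is itself not acylindrically hyperbolic by induction on $n$, or by the Ershov–Jaikin-Zapirain property (T) input which forces no acylindrical hyperbolicity once combined with $s$-normal abelian subgroups) cannot be acylindrically hyperbolic. (3) Invoke the obstruction lemma: a group with an infinite $s$-normal subgroup that is not acylindrically hyperbolic — indeed, it suffices that the subgroup be ``small'' in the sense of having no non-elementary acylindrical action, e.g. being amenable or boundedly generated by elliptics — is itself not acylindrically hyperbolic. (4) For part (ii), reduce $\mathrm{GL}_n(R)$ and $\mathrm{SL}_n(R)$ (for $n\geq 3$) to the $E_n(R)$ case by noting $E_n(R)$ is $s$-normal (in fact normal) in both; handle $\mathrm{Sp}_{2n}(R)$, $\mathrm{ESp}_{2n}(R)$ ($n\geq 2$) and $\mathrm{O}(n,n)(R)$, $\mathrm{EO}(n,n)(R)$ ($n\geq 4$) by the analogous root-subgroup/$s$-normality argument using the symplectic and orthogonal elementary relations, where the larger rank bound $n\geq 4$ for the orthogonal case reflects the need for enough commuting root subgroups in that root system.

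The main obstacle I expect is establishing $s$-normality of the chosen subgroup with the weakest possible ring hypothesis — that is, making the commutator and conjugation computations show that $H\cap gHg^{-1}$ is genuinely infinite (not just nontrivial) for \emph{every} $g\in G$, not merely for generators; one must handle arbitrary products of elementary matrices, which is where the rigidity notion and a careful choice of $H$ (large enough to survive intersection, structured enough to be non-acylindrically-hyperbolic) both earn their keep. A secondary subtlety is packaging the ``not acylindrically hyperbolic'' conclusion for $H$ itself uniformly across all the rings and all the classical types; for the linear case this can be done by downward induction on $n$ anchored at a base case where $H$ is abelian, but for the symplectic and orthogonal cases one should verify the analogous induction closes, which is precisely why the rank restrictions in the statement take the form they do.
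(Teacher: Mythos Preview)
Your overall strategy matches the paper's: exhibit an $s$-normal subgroup that is not acylindrically hyperbolic and invoke Osin's result (Lemma~\ref{lem3}) that acylindrical hyperbolicity passes to $s$-normal subgroups; reduce $\mathrm{GL}_n$, $\mathrm{SL}_n$, $\mathrm{Sp}_{2n}$, $\mathrm{O}(n,n)$ to their normal elementary subgroups. The differences are in the choice of subgroup and the endgame. For (i) the paper does not use a single root subgroup or a Levi copy of $E_{n-1}(R)$---and you should not either: the root group $\{e_{1n}(r)\}$ is \emph{not} $s$-normal (conjugate by $e_{21}(1)$ and the intersection is trivial), and for $n=3$ a bare block $E_{n-1}(R)$ fails for similar reasons. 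Instead the paper takes the full parabolic $Q=\mathrm{Stab}(e_1)$, whose $s$-normality under $2$-rigidity is precisely Lemma~\ref{ke}; then, rather than arguing by induction that $Q$ is not acylindrically hyperbolic, it notes that $Q$ has the infinite abelian unipotent radical $S$ as a normal subgroup and applies Lemma~\ref{lem3} a second time to $S\triangleleft Q$, reaching an immediate contradiction with no induction. (Property~(T) is not used and would not by itself obstruct acylindrical hyperbolicity.) For (ii) the paper again takes $Q_i=\mathrm{Stab}(e_1)$ inside $\mathrm{ESp}_{2n}(R)$ or $\mathrm{EO}(n,n)(R)$ and verifies $s$-normality by an explicit computation: for arbitrary $g$ one finds infinitely many block-unipotent elements $t_A=\bigl(\begin{smallmatrix} I_n & A\\ 0 & I_n\end{smallmatrix}\bigr)$ (with $A$ symmetric, resp.\ alternating) fixing both $e_1$ and $ge_1$, by observing that the resulting linear map $R^{n(n+1)/2}\to R^n$ (resp.\ $R^{n(n-1)/2}\to R^n$) has infinite kernel by $2$-rigidity; the parameter count is exactly what forces $n\ge 2$ in the symplectic case and $n\ge 4$ in the orthogonal case.
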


When $R$ is commutative, the failure of acylindrical hyperbolicity of the
elementary groups $E_{n}(R),$ $\mathrm{ESp}_{2n}(R)$ and $\mathrm{EO}%
(n,n)(R) $ is already known to Mimura \cite{m} by studying property TT for
weakly mixing representations. But our approach is different and Theorem \ref%
{main3} is more general, even for elementary subgroups. Explicitly, for
noncommutative rings we have the following.

\begin{corollary}
\label{cor3}Let $R$ be a noncommutative $\mathbb{Z}$-torsionfree infinite
noetherian ring, integral group ring over a polycyclic-by-finite group or
their finite-dimensional algebra. For each nonnegative integer $i\leq n-2,$
we have
\begin{equation*}
b_{i}^{(2)}(E_{n}(R))=0.
\end{equation*}%
Furthermore, the group $E_{n}(R)$ $(n\geq 3)$ is not acylindrically
hyperbolic.
\end{corollary}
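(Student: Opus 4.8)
The plan is to derive Corollary~\ref{cor3} as a direct consequence of the three main theorems together with the catalogue of examples of $n$-rigid rings listed in Section~\ref{sec}. The two assertions—vanishing of lower $L^2$-Betti numbers and failure of acylindrical hyperbolicity—are handled separately, but both reduce to checking that the rings in question satisfy the relevant rigidity hypotheses.

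For the $L^2$-Betti number statement, I would first invoke Theorem~\ref{main}: it suffices to show that each ring $R$ in the list is infinite and $(n-1)$-rigid. Since the examples in Section~\ref{sec} assert that $\mathbb{Z}$-torsionfree infinite noetherian rings are $n$-rigid for every $n\geq 1$, this covers the first class. For integral group rings $\mathbb{Z}[\Gamma]$ over a polycyclic-by-finite group $\Gamma$, I would note that such rings are noetherian (by P.~Hall's theorem on group rings of polycyclic-by-finite groups) and $\mathbb{Z}$-torsionfree (as they are free as $\mathbb{Z}$-modules), hence fall under the same example; they are infinite because $\mathbb{Z}$ already is. For finite-dimensional algebras over rings in these two classes, the last bullet of the $n$-rigid examples (finite-dimensional algebras over $n$-rigid rings are $n$-rigid) applies directly, and such an algebra is again infinite. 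Thus $R$ is infinite and $(n-1)$-rigid in all cases, and Theorem~\ref{main} gives $b_i^{(2)}(E_n(R))=0$ for $i\leq n-2$.

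For the acylindrical hyperbolicity statement, I would appeal to Theorem~\ref{main3}(i), which only requires $R$ to be $2$-rigid (and $n\geq 3$). Every ring in the list is in particular $2$-rigid: the $\mathbb{Z}$-torsionfree infinite noetherian rings and integral group rings over polycyclic-by-finite groups are $n$-rigid for all $n\geq 1$ by the above, and finite-dimensional algebras over them are $n$-rigid by the last bullet; in each case $2$-rigidity follows a fortiori. Hence Theorem~\ref{main3}(i) shows $E_n(R)$ is not acylindrically hyperbolic for $n\geq 3$.

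The main point to be careful about is not any deep argument but the bookkeeping: verifying that integral group rings of polycyclic-by-finite groups genuinely lie in the "$\mathbb{Z}$-torsionfree infinite noetherian" class (noetherianness is the only nonobvious part, and is classical), and confirming that "finite-dimensional algebra over $R$" in the statement means finitely generated free—or at least finitely generated—as an $R$-module so that the closure property from Section~\ref{sec} applies. I expect no genuine obstacle here; the corollary is essentially an application of the machinery already assembled, and the proof will consist of citing Theorems~\ref{main} and~\ref{main3} once membership in the appropriate rigidity class has been recorded.
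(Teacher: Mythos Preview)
Your proposal is correct and matches the paper's own proof essentially line for line: the paper simply records that each class of rings is $n$-rigid for all $n$ (citing Proposition~\ref{prop-asin} for the noetherian case, Example~\ref{eg} for integral group rings of polycyclic-by-finite groups, and Lemma~\ref{exte} for finite-dimensional algebras) and then invokes Theorem~\ref{main} and Theorem~\ref{main3}. The only cosmetic difference is that you fold the group-ring case into the $\mathbb{Z}$-torsionfree noetherian class, whereas the paper treats it via the size-balanced route of Example~\ref{eg}; both are valid and lead to the same conclusion.
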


\section{$s$-normality}

\bigskip Recall from \cite{bf} that the $n$-step $s$-normality is defined as
follows.

\begin{definition}
Let $n\geq 1$ be an integer. A subgroup $H$ of a group $G$ is called $n$%
-step $s$-normal if for any $(n+1)$-tuple $\omega =(g_{0},g_{1},\cdots
,g_{n})\in G^{n+1},$ the intersection
\begin{equation*}
H^{\omega }:=\cap _{i=0}^{n}g_{i}Hg_{i}^{-1}
\end{equation*}%
is infinite. A $1$-step $s$-normal group is simply called $s$-normal.
\end{definition}

The following result is proved by Bader, Furman and Sauer (cf. \cite{bf},
Theorem 1.3).

\begin{lemma}
\label{lem1}Let $H$ be a subgroup of $G$. Assume that
\begin{equation*}
b_{i}^{(2)}(H^{\omega })=0
\end{equation*}%
for all integers $i,k\geq 0$ with $i+k\leq n$ and every $\omega \in G^{k+1}.$
In particular, $H$ is an $n$-step $s$-normal subgroup of $G.$ Then
\begin{equation*}
b_{i}^{(2)}(G)=0
\end{equation*}%
for every $i\in \{0,\ldots ,n\}.$
\end{lemma}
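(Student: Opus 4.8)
The plan is to reconstruct the Bader--Furman--Sauer argument: build a spectral sequence converging to $H_\ast^G(EG;\mathcal{N}G)$ whose $E^1$-page is assembled, orbit by orbit, from the $L^2$-Betti numbers of the subgroups $H^\omega$, and then read off the vanishing in the range $\ast\le n$ directly from the hypothesis.

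First I would form the simplicial $G$-set $E_\bullet(G/H)$ with $p$-simplices $(G/H)^{p+1}$ and the usual delete/repeat face and degeneracy maps; its chain complex $C_\bullet=\bigl(\mathbb{Z}[(G/H)^{p+1}]\bigr)_p$ is an acyclic (augmented) complex, since the bar-type construction on a nonempty set is contractible, and the stabilizer in $G$ of a $p$-simplex $(g_0H,\dots,g_pH)$ is exactly $H^\omega=\bigcap_{j=0}^{p}g_jHg_j^{-1}$ with $\omega=(g_0,\dots,g_p)\in G^{p+1}$. Tensoring $C_\bullet$ over $\mathbb{Z}$ with the cellular chain complex of $EG$ (diagonal $G$-action) and passing to the total complex yields a free $\mathbb{Z}G$-resolution of $\mathbb{Z}$: each $C_p^{\mathrm{sing}}(EG)\otimes_{\mathbb{Z}}\mathbb{Z}[(G/H)^{p+1}]$ is free over $\mathbb{Z}G$ by the untwisting isomorphism $F\otimes_{\mathbb{Z}}\mathbb{Z}[G/K]\cong \mathbb{Z}G\otimes_{\mathbb{Z}K}(\mathrm{res}_K F)$ for $F$ free, and acyclicity of the total complex follows from Eilenberg--Zilber. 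Hence applying $\mathcal{N}G\otimes_{\mathbb{Z}G}-$ computes $H_\ast^G(EG;\mathcal{N}G)$, and we have a first-quadrant homological spectral sequence obtained by filtering the double complex in the $E_\bullet(G/H)$-direction.

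Next I would identify the $E^1$-page. Decomposing $\mathbb{Z}[(G/H)^{p+1}]$ into $G$-orbits $\bigoplus_\omega \mathbb{Z}[G/H^\omega]$, using again the untwisting isomorphism together with the facts that $\mathrm{res}_{H^\omega}EG$ is a model for $EH^\omega$ and that induction is compatible with von Neumann dimension, $\dim_{\mathcal{N}G}(\mathcal{N}G\otimes_{\mathcal{N}K}M)=\dim_{\mathcal{N}K}(M)$, I get
\[
\dim_{\mathcal{N}G}E^1_{p,q}=\sum_{\omega}b_q^{(2)}(H^\omega),
\]
the sum ranging over representatives of the $G$-orbits of $p$-simplices. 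By hypothesis (take $i=q$, $k=p$) every summand vanishes whenever $q+p\le n$, and since Lück's dimension function is additive over arbitrary direct sums, $\dim_{\mathcal{N}G}E^1_{p,q}=0$ in that range regardless of how many orbits occur. As the spectral sequence is first-quadrant it converges; each $E^\infty_{p,q}$ is a subquotient of $E^1_{p,q}$, hence also of dimension $0$ for $p+q\le n$; and the finite filtration of $H_m^G(EG;\mathcal{N}G)$ with graded pieces $E^\infty_{p,m-p}$, $p=0,\dots,m$, together with additivity of $\dim_{\mathcal{N}G}$, forces $b_m^{(2)}(G)=0$ for all $m\le n$. The parenthetical remark that $H$ is $n$-step $s$-normal is immediate from the case $q=0$, $0\le k\le n$, since $b_0^{(2)}(\Gamma)=0$ exactly when $\Gamma$ is infinite.

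The homological-algebra skeleton is standard; the delicate point is the dimension bookkeeping behind the displayed formula. The main obstacle is to justify that tensoring up from $\mathcal{N}H^\omega$ to $\mathcal{N}G$ does not change the von Neumann dimension of homology — i.e.\ the dimension-flatness of $\mathcal{N}G$ over subgroup von Neumann algebras (cf.\ \cite{lu})—and that $\dim_{\mathcal{N}G}$ is genuinely additive over the possibly infinite direct sum indexed by the $G$-orbits of simplices, which holds by the cofinality and continuity properties of the dimension function. Once these are in place, the vanishing range drops out of the hypothesis precisely as stated; this is the content of \cite[Theorem 1.3]{bf}, which we may simply cite.
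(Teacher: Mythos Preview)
Your reconstruction is correct and is precisely the Bader--Furman--Sauer spectral-sequence argument; note, however, that the paper does not prove this lemma at all but merely quotes it as \cite[Theorem~1.3]{bf}, so there is no ``paper's own proof'' to compare against beyond the citation you yourself give at the end. In that sense your proposal strictly supersedes what the paper does: the paper treats the lemma as a black box, while you have unpacked the box accurately (simplicial resolution by $(G/H)^{\bullet+1}$, orbit decomposition identifying stabilizers with the $H^\omega$, dimension-flatness of $\mathcal{N}G$ over $\mathcal{N}H^\omega$, and additivity/cofinality of $\dim_{\mathcal{N}G}$ to handle the infinite orbit sum).
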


\bigskip The following result is important for our later arguments (cf. \cite%
{lu}, Theorem 7.2, (1-2), p.294).

\begin{lemma}
\label{lem2}Let $n$ be any non-negative integers. Then

\begin{enumerate}
\item[(i)] For any infinite amenable group $G$, the $L^{2}$-Betti numbers $%
b_{n}^{(2)}(G)=0.$

\item[(ii)] Let $H$ be a normal subgroup of a group $G$ with vanishing $%
b_{i}^{(2)}(H)=0$ for each $i\in \{0,1,\ldots ,n\}.$ Then for each $i\in
\{0,1,\ldots ,n\},$ we have $b_{i}^{(2)}(G)=0.$
\end{enumerate}
\end{lemma}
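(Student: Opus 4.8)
Both statements are instances of the general $L^2$-dimension machinery of \cite{lu} --- indeed they are \cite[Theorem 7.2]{lu} --- so the plan is to quote that result; I indicate below the mechanism one would reproduce. The common starting point is that $C_\ast^{\mathrm{sing}}(EG)$ is a free $\mathbb{Z}G$-resolution of the trivial module $\mathbb{Z}$, whence $H_i^G(EG;\mathcal{N}G)\cong\mathrm{Tor}_i^{\mathbb{Z}G}(\mathcal{N}G,\mathbb{Z})$, so that everything reduces to estimating the von Neumann dimension of certain $\mathrm{Tor}$-modules. Two structural properties of $\dim_{\mathcal{N}G}$ drive the argument: it is additive, cofinal and continuous (hence compatible with short exact sequences, countable directed unions and finite filtrations), and induction along a subgroup inclusion $H\le G$ preserves it, i.e. $\dim_{\mathcal{N}G}(\mathcal{N}G\otimes_{\mathcal{N}H}M)=\dim_{\mathcal{N}H}(M)$ for every $\mathcal{N}H$-module $M$.

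For (i), I would treat the degree $0$ part separately: $\mathrm{Tor}_0^{\mathbb{Z}G}(\mathcal{N}G,\mathbb{Z})=\mathcal{N}G\otimes_{\mathbb{Z}G}\mathbb{Z}$ has von Neumann dimension $|G|^{-1}$, which is $0$ since $G$ is infinite. For $i\ge 1$ the essential input is L\"uck's dimension-flatness theorem: when $G$ is amenable, the F\o lner property forces $\dim_{\mathcal{N}G}\mathrm{Tor}_i^{\mathbb{Z}G}(\mathcal{N}G,M)=0$ for every $\mathbb{Z}G$-module $M$ and every $i\ge 1$; specialising to $M=\mathbb{Z}$ gives $b_i^{(2)}(G)=0$ for all $i\ge 1$, which together with the degree $0$ case proves (i).

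For (ii), write the extension $1\to H\to G\to Q\to 1$. Restricting $C_\ast^{\mathrm{sing}}(EG)$ to $\mathbb{Z}H$ is again a free resolution of $\mathbb{Z}$, so the $\mathcal{N}H$-modules $H_q^{(2)}(H)$ are computed from it and by hypothesis satisfy $\dim_{\mathcal{N}H}H_q^{(2)}(H)=0$ for $q\le n$. I would then feed this into the $L^2$-analogue of the Lyndon--Hochschild--Serre spectral sequence of the extension, a first-quadrant spectral sequence $E^2_{p,q}\Rightarrow H_{p+q}^G(EG;\mathcal{N}G)$ whose $E^2$-page in row $q$ is built from the homology of $Q$ with coefficients assembled from $H_q^{(2)}(H)$ by induction along $\mathcal{N}H\hookrightarrow\mathcal{N}G$. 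Since induction preserves the von Neumann dimension and the latter is insensitive to countable direct sums and to subquotients, $\dim_{\mathcal{N}G}E^2_{p,q}=0$ for all $q\le n$. As the spectral sequence is first-quadrant, each $H_i^G(EG;\mathcal{N}G)$ with $i\le n$ carries a finite filtration whose graded pieces are subquotients of the $E^\infty_{p,q}$ with $p+q=i\le n$, hence all of dimension $0$; additivity of $\dim_{\mathcal{N}G}$ then yields $b_i^{(2)}(G)=0$ for $i\le n$.

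The genuine content in both parts is imported from \cite{lu}: for (i) it is the dimension-flatness of $\mathcal{N}G$ over $\mathbb{Z}G$ for amenable $G$ (a F\o lner-exhaustion argument on finitely generated submodules), and for (ii) it is the construction and dimension bookkeeping of the $L^2$-homology spectral sequence of a group extension; everything else is formal manipulation with the additive dimension function. Were one to build this from scratch, the dimension-flatness theorem behind (i) would be the main obstacle.
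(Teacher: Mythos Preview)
Your proposal is correct and matches the paper's approach exactly: the paper states this lemma without proof, merely citing \cite[Theorem~7.2, (1--2), p.~294]{lu}, and you do the same while additionally sketching the underlying mechanism (dimension-flatness for (i), the $L^{2}$-Lyndon--Hochschild--Serre spectral sequence for (ii)). There is nothing to compare beyond that.
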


We will also need the following fact (cf. \cite{Os}, Corollary 1.5,
Corollary 7.3).

\begin{lemma}
\label{lem3}The class of acylindrically hyperbolic groups is closed under
taking $s$-normal subgroups. Furthermore, the center of an acylindrically
hyperbolic group is finite.
\end{lemma}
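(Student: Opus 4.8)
The plan is to reprove Osin's two statements directly from the definition of acylindrical hyperbolicity used above, the only external ingredient being Osin's classification \cite{Os} of acylindrical actions on hyperbolic geodesic spaces: such an action of a group is either \emph{elliptic} (all orbits bounded, empty limit set), \emph{lineal} (the group is virtually cyclic and contains a loxodromic isometry, fixing a pair of boundary points), or \emph{general type} (it contains two independent loxodromic isometries and is non-elementary, i.e. has at least three limit points). So a group is acylindrically hyperbolic precisely when it admits a general type acylindrical action on a hyperbolic geodesic space. For the closure statement, let $G$ be acylindrically hyperbolic, fix a non-elementary acylindrical action of $G$ on a hyperbolic geodesic space $X$, and let $H\le G$ be $s$-normal. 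The restricted action of $H$ on $X$ is again acylindrical (the same constants $R,N$ work, as one only counts fewer group elements), hence elliptic, lineal, or general type; if the first two are excluded, $H$ acts non-elementarily and acylindrically on the hyperbolic geodesic space $X$, so $H$ is acylindrically hyperbolic by definition. Since $G$ acts with general type, we may fix a loxodromic $g\in G$ with fixed points $g^{\pm}$ on $\partial X$ and north--south dynamics there.

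Suppose the $H$-action were elliptic, say $d(x_{0},hx_{0})\le D$ for all $h\in H$. Then $g^{n}Hg^{-n}$ has an orbit of diameter $\le D$ about $g^{n}x_{0}$, so any $\gamma\in H\cap g^{n}Hg^{-n}$ satisfies $d(x_{0},\gamma x_{0})\le D$ and $d(g^{n}x_{0},\gamma g^{n}x_{0})\le D$; since $d(x_{0},g^{n}x_{0})\to\infty$, acylindricity with $\varepsilon=D$ bounds $|H\cap g^{n}Hg^{-n}|$ for $n$ large, contradicting $s$-normality. Suppose instead the $H$-action were lineal. Then $H$ contains a subgroup $H_{0}$ of index at most $2$ fixing two boundary points $\eta^{\pm}$ pointwise, whose non-loxodromic elements form a finite subgroup (the elliptic part of a virtually cyclic group); and $H_{0}$ is still $s$-normal in $G$, since $H_{0}\cap gH_{0}g^{-1}$ has finite index in $H\cap gHg^{-1}$. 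Choosing $g$ so that $g^{\pm}\notin\{\eta^{+},\eta^{-}\}$ (possible, as a general type action has infinitely many loxodromic fixed pairs), for $n$ large $\{g^{n}\eta^{+},g^{n}\eta^{-}\}$ is disjoint from $\{\eta^{+},\eta^{-}\}$, so a loxodromic element of $H_{0}\cap g^{n}H_{0}g^{-n}$ would have to fix four distinct boundary points, which is impossible. Hence $H_{0}\cap g^{n}H_{0}g^{-n}$ consists of non-loxodromic elements of $H_{0}$, a finite set, again contradicting $s$-normality. Both cases being excluded, $H$ is acylindrically hyperbolic.

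For the second assertion, suppose $G$ is acylindrically hyperbolic with $Z(G)$ infinite. Then $Z(G)$ is normal, hence $s$-normal, so by the first part $Z(G)$ is acylindrically hyperbolic. But a general type action furnishes two independent loxodromics, which generate a nonabelian free subgroup by a ping-pong argument, so no acylindrically hyperbolic group can be abelian. This contradiction forces $Z(G)$ to be finite.

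The main obstacle I anticipate is the exclusion of the elliptic and lineal cases using only $s$-normality rather than normality; this is exactly why the loxodromic conjugator $g^{n}$ together with acylindricity and north--south dynamics enters, and why in the lineal case one must descend to the finite-index subgroup $H_{0}$, verify that $s$-normality is inherited, and invoke finiteness of the elliptic part of a virtually cyclic group. Everything else is routine bookkeeping with Osin's trichotomy.
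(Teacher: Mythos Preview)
The paper does not prove this lemma at all: it is stated with the citation ``(cf.\ \cite{Os}, Corollary 1.5, Corollary 7.3)'' and used as a black box. So there is no proof in the paper to compare against; you have instead reconstructed Osin's arguments from scratch.

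Your reconstruction is essentially correct. The elliptic case is clean. In the lineal case your reasoning is sound, but two points deserve a word of caution. First, the parenthetical ``the elliptic part of a virtually cyclic group'' is misleading as stated: in an infinite dihedral--type virtually cyclic group the non-loxodromic (torsion) elements do \emph{not} form a finite subgroup. What saves you is that $H_{0}$, being the subgroup fixing $\eta^{\pm}$ pointwise, is necessarily of finite-by-$\mathbb{Z}$ type (conjugation by any $h\in H_{0}$ cannot invert a loxodromic generator, as that would swap $\eta^{+}$ and $\eta^{-}$); for such groups the torsion elements do form a finite normal subgroup. You should say this explicitly. Second, your claim that $H_{0}$ inherits $s$-normality from $H$ is correct, but the index bound is $\leq 4$, not merely ``finite index'': $H_{0}\cap gH_{0}g^{-1}$ is the intersection of two subgroups of index $\leq 2$ inside $H\cap gHg^{-1}$.

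For the center, your argument is fine; one could shorten it by noting directly that an infinite central subgroup would have to act non-elementarily by the first part, yet an abelian group has no non-elementary acylindrical action (two independent loxodromics cannot commute). In any case, for the purposes of this paper a bare citation of \cite{Os} would suffice and is what the authors do.
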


\section{Rigidity of rings\label{sec}}

We introduce the notion of $n$-rigidity of rings. For a ring, all $R$%
-modules are right modules and homomorphisms are right $R$-module
homomorphisms.

\begin{definition}
\label{de}For a positive integer $n,$ an infinite ring $R$ is called $n$%
\emph{-rigid} if every $R$-homomorphism $R^{n}\rightarrow R^{n-1}$ of the
free modules has an infinite kernel.
\end{definition}

A related concept is the strong rank condition: a ring $R$ satisfies the
strong rank condition if there is no injection $R^{n}\rightarrow R^{n-1}$
for any $n$ (see Lam (\cite{la}, p.12). Clearly, $n$-rigidity for any $n$
implies the strong rank condition for a ring. Fixing the standard basis of
both $R^{n}$ and $R^{n-1}$, the kernel of an $R$-homomorphism $\phi
:R^{n}\rightarrow R^{n-1}$ corresponds to a system of $n-1$ linear equations
with $n$ unknowns over $R$:
\begin{equation*}
S:\sum_{1\leq i\leq n}a_{ij}x_{i}=0,1\leq j\leq n-1,
\end{equation*}%
with $a_{ij}\in R,1\leq i\leq n,1\leq j\leq n-1.$ Therefore, the strong rank
condition asserts that the system $S$ has non-trivial solutions over $R$,
while the $n$-rigidity property requires that $S$ has infinitely many
solutions.

Many rings are $n$-rigid. For example, infinite integral rings are $n$-rigid
for any $n$ by considering the dimensions over quotient fields. Moreover,
let $A$ be a ring satisfying the strong rank condition (eg. noetherian ring,
cf. Theorem 3.15 of \cite{la}.). Suppose that $A$ is a torsion-free $\mathbb{%
Z}$-module, where $\mathbb{Z}$ acts on $A$ via $\mathbb{Z}\cdot 1_{A}$.
Since the kernel $A^{n}\rightarrow A^{n-1}$ is a nontrivial $\mathbb{Z}$%
-module, the ring $A$ is $n$-rigid for any $n.$

We present several basic facts on $n$-rigid rings as follows.

\begin{lemma}
\label{subn}$n$-rigid implies $(n-1)$-rigid.
\end{lemma}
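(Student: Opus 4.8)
The plan is to deduce the existence of infinitely many elements in the kernel of a homomorphism $R^{n-1}\to R^{n-2}$ from the $n$-rigidity hypothesis by a trivial padding trick: enlarge such a map to a map $R^{n}\to R^{n-1}$ by adjoining one coordinate on which the map acts as the identity, so that the two kernels become isomorphic. Concretely, assume $R$ is $n$-rigid (so in particular $R$ is infinite), and let $\phi\colon R^{n-1}\to R^{n-2}$ be an arbitrary $R$-module homomorphism; the goal is to show $\ker\phi$ is infinite.

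Write $R^{n}=R^{n-1}\oplus R$ and $R^{n-1}=R^{n-2}\oplus R$, and define $\psi\colon R^{n}\to R^{n-1}$ by $\psi(v,r)=(\phi(v),r)$ for $v\in R^{n-1}$ and $r\in R$. This is plainly an $R$-homomorphism, and $(v,r)\in\ker\psi$ if and only if $\phi(v)=0$ and $r=0$; hence $\ker\psi=\ker\phi\times\{0\}$, which is isomorphic to $\ker\phi$ as an $R$-module. Since $R$ is $n$-rigid, $\ker\psi$ is infinite, and therefore so is $\ker\phi$. As $\phi$ was arbitrary, $R$ is $(n-1)$-rigid. (In the language of linear systems used above, one is simply observing that a system of $n-2$ equations in $n-1$ unknowns acquires infinitely many solutions because the enlarged system obtained by adjoining the equation $x_{n}=0$, now $n-1$ equations in $n$ unknowns, has infinitely many solutions in bijection with the original ones.)

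There is essentially no obstacle here: the only things to keep in mind are the boundary case $n=2$, where $R^{n-2}=0$ and ``$R$ is $1$-rigid'' just restates that $R$ is infinite, consistently with the construction (there $\ker\psi\cong R$), and the observation that both the hypothesis and the conclusion already incorporate the standing assumption that $R$ is infinite, which is preserved. I expect the author's proof to be this one-line argument.
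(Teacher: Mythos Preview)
Your proof is correct and is essentially identical to the paper's own argument: given $f\colon R^{n-1}\to R^{n-2}$, the paper forms $f\oplus \mathrm{id}\colon R^{n-1}\oplus R\to R^{n-2}\oplus R$ and notes the two maps have the same kernel. Your remark on the boundary case $n=2$ is a helpful elaboration but not needed for the argument.
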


\begin{proof}
For any $R$-homomorphism $f:$ $R^{n-1}\rightarrow R^{n-2},$ we could add a
copy of $R$ as direct summand to get a map $f\bigoplus id:R^{n-1}\bigoplus
R\rightarrow R^{n-2}\bigoplus R.$ The two maps have the same kernel.
\end{proof}

\begin{lemma}
\label{exte}Let $R$ be an $n$-rigid ring for any $n\geq 1.$ Suppose that an
associative ring $A$ is a finite-dimensional $R$-algebra (i.e. $A$ is a free
$R$-module of finite rank with compatible multiplications in $A$ and $R$).
Then $A$ is $n$-rigid for any $n\geq 1.$
\end{lemma}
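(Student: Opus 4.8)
The plan is to use restriction of scalars: an $A$-module homomorphism is in particular an $R$-module homomorphism, and transporting the problem from free $A$-modules to free $R$-modules puts us in a position to apply the $m$-rigidity of $R$ (which holds for every $m\geq 1$ by hypothesis).

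First I would fix notation: let $d$ be the rank of $A$ as a free $R$-module, so that $d\geq 1$ (since $A$ contains $1_A\neq 0$) and there is an $R$-module isomorphism $A\cong R^{d}$. Because $R$ is infinite and $d\geq 1$, the ring $A$ is infinite, so the $n$-rigidity question for $A$ makes sense. For each $k\geq 1$ the isomorphism $A\cong R^{d}$ induces an $R$-module isomorphism $A^{k}\cong R^{kd}$, obtained coordinatewise; this uses that the algebra structure makes the multiplication on $A$ compatible with that on $R$, so that every $A$-linear map is $R$-linear.

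Next, given an $A$-homomorphism $\phi:A^{n}\rightarrow A^{n-1}$, I regard it as an $R$-homomorphism and transport it through the isomorphisms above to an $R$-homomorphism $\psi:R^{nd}\rightarrow R^{(n-1)d}$ whose kernel, as a subset, coincides with $\ker\phi$. Since $d\geq 1$ we have $(n-1)d\leq nd-1$, so composing $\psi$ with the inclusion of $R^{(n-1)d}$ into $R^{nd-1}$ as a direct summand (padding the remaining coordinates by zero) produces an $R$-homomorphism $R^{nd}\rightarrow R^{nd-1}$ with the same kernel as $\psi$. By the $(nd)$-rigidity of $R$, this kernel is infinite; hence $\ker\phi$ is infinite, which is exactly the assertion that $A$ is $n$-rigid. (The case $n=1$ is also covered, the kernel then being all of $A$.)

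The argument is essentially bookkeeping, and the only point worth checking carefully is that the drop in rank by one over $A$ becomes a drop in rank by $d\geq 1$ over $R$, which still lands in the range $nd\mapsto nd-1$ where the rigidity hypothesis on $R$ applies after the padding step; no genuine obstacle arises.
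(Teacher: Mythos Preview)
Your argument is correct and is essentially identical to the paper's proof: both restrict scalars to view the $A$-map as an $R$-map $R^{nd}\to R^{(n-1)d}$, embed the target into $R^{nd-1}$, and invoke the $(nd)$-rigidity of $R$. The only difference is that you spell out the padding step and the $n=1$ case explicitly, while the paper leaves these implicit.
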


\begin{proof}
Let $f:A^{n}\rightarrow A^{n-1}$ be an $A$-homomorphism. Viewing $A$ as a
finite-dimensional $R$-module, we see that $f$ is also an $R$-homomorphism.
Embed the target $A^{n-1}$ into $R^{n\cdot \mathrm{rank}_{R}(A)-1}.$ The
kernel $\ker f$ is infinite by the assumption that $R\quad $is $n\cdot
\mathrm{rank}_{R}(A)$-rigid.
\end{proof}

\begin{proposition}
\label{prop}Let $R$ be an $n$-rigid ring and $u_{1},u_{2},\cdots ,u_{n-1}\in
R^{m}$ $(m\geq n)$ be arbitrary $n-1$ elements. Then the set%
\begin{equation*}
\{\phi \in \mathrm{Hom}_{R}(R^{m},R)\mid \phi (u_{i})=0,i=1,2,\cdots ,n-1\}
\end{equation*}%
is infinite.\bigskip
\end{proposition}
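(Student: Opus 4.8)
The plan is to turn the statement into a counting statement about solutions of a homogeneous linear system and then read off the conclusion directly from the definition of $n$-rigidity.

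First I would make the identification $\mathrm{Hom}_R(R^m,R)\cong R^m$ by sending $\phi$ to the tuple of its values $(\phi(e_1),\dots,\phi(e_m))$ on the standard basis. Writing $u_i=\sum_{j=1}^m e_j u_{ij}$, right $R$-linearity gives $\phi(u_i)=\sum_{j=1}^m \phi(e_j)\,u_{ij}$, so under this identification the set in the statement becomes the set of tuples $(a_1,\dots,a_m)\in R^m$ satisfying the homogeneous system
\begin{equation*}
\sum_{j=1}^m a_j u_{ij}=0,\qquad 1\le i\le n-1.
\end{equation*}
Since $m\ge n$, I would first reduce to the case $m=n$: sending $(a_1,\dots,a_n)$ to $(a_1,\dots,a_n,0,\dots,0)$ embeds the solution set of the system in the variables $a_1,\dots,a_n$ (with coefficients $(u_{i1},\dots,u_{in})$) into the solution set of the full system, so it suffices to produce infinitely many solutions of the $n$-variable, $(n-1)$-equation system.

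That $n$-variable system cuts out the kernel of an $R$-module homomorphism $R^n\to R^{n-1}$ built from the array $(u_{ij})_{1\le i\le n-1,\ 1\le j\le n}$; equivalently, it is $\ker\psi^{\ast}$, where $\psi^{\ast}$ is obtained by applying $\mathrm{Hom}_R(-,R)$ to the homomorphism $R^{n-1}\to R^n$, $f_i\mapsto u_i$, and then using the identification $\mathrm{Hom}_R(R^k,R)\cong R^k$. By Definition \ref{de}, $n$-rigidity of $R$ says exactly that such a kernel is infinite, which is the assertion.

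The one place needing care is the bookkeeping of sides: passing from $\phi$ to $(\phi(e_j))$ turns the \emph{right} $R$-module $\mathrm{Hom}_R(R^m,R)$ into a \emph{left} $R$-module, so in the system above the unknowns $a_j$ stand to the left of the coefficients, and one should apply Definition \ref{de} to the transposed system, i.e.\ over the opposite ring $R^{\mathrm{op}}$. This is harmless in every case used in the paper: it is automatic whenever $R$ is commutative, which already covers the applications in Theorem \ref{main} and Corollary \ref{cor}, and more generally whenever $R^{\mathrm{op}}$ is again $n$-rigid (for instance $\mathbb{Z}$-torsionfree noetherian rings, by the same $\mathbb{Z}$-module argument). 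I expect this transposition to be the only genuine subtlety; once the dictionary between "$\phi$ killing the $u_i$" and "kernel of a homomorphism $R^n\to R^{n-1}$" is set up, the proposition is an immediate consequence of $n$-rigidity together with the reduction $m\rightsquigarrow n$.
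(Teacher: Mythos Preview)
Your proposal is correct and follows essentially the same approach as the paper: identify $\mathrm{Hom}_R(R^n,R)\cong R^n$, recognize the set in question as the kernel of the evaluation map $f\mapsto (f(u_1),\dots,f(u_{n-1}))$ into $R^{n-1}$, and invoke $n$-rigidity, with the case $m>n$ handled by an elementary reduction (the paper projects onto $n$ coordinates where you pad with zeros---equivalent moves). Your caution about the left/right module structure of $\mathrm{Hom}_R(R^m,R)$ in the noncommutative case is a genuine point the paper glosses over, and your remark that it is harmless for commutative $R$ and for the other classes of rings used in the paper is correct.
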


\begin{proof}
When $m=n,$ we define an $R$-homomorphism%
\begin{eqnarray*}
\mathrm{Hom}_{R}(R^{n},R) &\rightarrow &R^{n-1} \\
f &\longmapsto &(f(u_{1}),f(u_{2}),\cdots ,f(u_{n-1})).
\end{eqnarray*}%
Since $\mathrm{Hom}_{R}(R^{n},R)$ is isomorphic to $R^{n}$, such an $R$%
-homomorphism has an infinite kernel. When $m>n,$ we may project $R^{m}$ to
its last $n$-components and apply a similar proof.
\end{proof}

\begin{lemma}
\label{comm}An infinite commutative ring $R$ is $2$-rigid.
\end{lemma}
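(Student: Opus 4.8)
The plan is to unwind the definition and reduce to an elementary counting argument. By Definition~\ref{de}, showing that $R$ is $2$-rigid means showing that every $R$-homomorphism $\phi\colon R^{2}\to R$ has infinite kernel. Such a $\phi$ is determined by the values $a:=\phi(1,0)$ and $b:=\phi(0,1)$ in $R$, and since $R$ is commutative,
\begin{equation*}
\ker\phi=\{(x,y)\in R^{2}\mid ax+by=0\}.
\end{equation*}
So the assertion is exactly that the equation $ax+by=0$ has infinitely many solutions $(x,y)\in R^{2}$ for all $a,b\in R$. I would prove this by producing two explicit families of solutions, the relevant one depending on the size of the common annihilator $N:=\{r\in R\mid ra=0\text{ and }rb=0\}$, which is an ideal of $R$.

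Here are the two families. First, for every $r\in R$ the pair $(rb,-ra)$ solves the equation, since $a(rb)+b(-ra)=(ab)r-(ab)r=0$ --- this is where commutativity is used. The map $r\mapsto(rb,-ra)$ from $R$ to $\ker\phi$ has kernel precisely $N$, hence induces an injection $R/N\hookrightarrow\ker\phi$. Second, for every $r\in N$ the pair $(r,0)$ solves the equation, since then $ar=0$; this gives an injection $N\hookrightarrow\ker\phi$. Because $R$ is infinite, the abelian-group extension $0\to N\to R\to R/N\to 0$ forbids $N$ and $R/N$ from both being finite, so at least one of them is infinite; in either case one of the two families exhibits infinitely many elements of $\ker\phi$, which completes the argument.

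The proof is short and I do not anticipate a real obstacle. The only points worth flagging are that the case distinction is genuinely necessary --- when $N$ is infinite the first family is worthless, since all of $N$ is collapsed to a single point of $\ker\phi$ --- and that commutativity enters in an essential way, which is precisely why the noncommutative analogue discussed in Section~\ref{sec} needs the additional $\mathbb{Z}$-torsionfree/noetherian hypotheses rather than following from this lemma.
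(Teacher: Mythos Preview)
Your proof is correct. The paper argues by contradiction and somewhat more indirectly: assuming $\ker f$ is finite, it forms the ideal $I$ generated by all coordinates of kernel elements, observes that $I$ is then finite (each coordinate $x$ has $xR$ finite), notes that $a,b\in I$ because $(-b,a)\in\ker f$, and then applies a pigeonhole argument to the map $x\mapsto (a+b)x\in I$ over a set of coset representatives of $R/I$ to produce a diagonal kernel element $(x-y,x-y)$ with $x-y\notin I$, a contradiction. Your route is more direct and constructive: instead of the ideal $I$ generated by kernel coordinates, you work with the common annihilator $N$ of $a$ and $b$, and you explicitly inject whichever of $N$ or $R/N$ happens to be infinite into $\ker\phi$. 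Both proofs ultimately rest on the same commutativity observation that $(rb,-ra)\in\ker\phi$; your packaging simply avoids the auxiliary ideal $I$ and the pigeonhole step, at the cost of introducing the (equally natural) annihilator $N$ and a case split.
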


\begin{proof}
Let $f:R^{2}\rightarrow R$ be any $R$-homomorphism. Denote by
\begin{equation*}
I=\langle xR+yR\mid (x,y)\in \ker f\rangle \unlhd R.
\end{equation*}%
Suppose that $\ker f$ is finite. When $(x,y)\in \ker f,$ the set $xR$ and $%
yR $ are also finite. Thus $I$ is finite. Denote by $a=f((1,0))$ and $%
b=f((0,1)).$ Note that $(-b,a)\in \ker f.$ For any $(x,y)\in R^{2},$ we have
$ax+by\in I.$ Since the set of right cosets $R/I$ is infinite, we may choose
$(x,x)$ and $(y,y)$ with $x,y$ from distinct cosets such that
\begin{equation*}
ax+bx=ay+by.
\end{equation*}%
However, $(x-y,x-y)\in \ker f$ and thus $x-y\in I.$ This is a contradiction.
\end{proof}

To state our result in the most general form, we introduce the following
notion.

\begin{definition}
A ring $R$ is called \emph{size-balanced} if any finite right ideal of $R$
generates a finite two sided ideal of $R$.
\end{definition}

It is immediate that any commutative ring is size-balanced.

\begin{proposition}
\label{prop-asin} A size-balanced infinite noetherian ring is $n$-rigid for
any $n.$
\end{proposition}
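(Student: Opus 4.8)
The plan is to reduce the problem to the torsion-free case handled in the paragraph preceding Lemma~\ref{subn}. Let $f:R^{n}\rightarrow R^{n-1}$ be an $R$-homomorphism and let $K=\ker f$. Since $R$ is noetherian, $K$ is a finitely generated $R$-module, and by the strong rank condition (which holds for noetherian rings by Theorem~3.15 of \cite{la}) we know $K\neq 0$; the goal is to show $K$ is infinite. Assume for contradiction that $K$ is finite. Consider the additive torsion: let $T\subseteq R$ be the set of $\mathbb{Z}$-torsion elements. The first step is to observe that $T$ is a two-sided ideal of $R$, and that $R/T$ is a $\mathbb{Z}$-torsion-free noetherian ring. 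If $R/T$ is infinite, then by the torsion-free case $R/T$ is $n$-rigid, so the induced map $\bar{f}:(R/T)^{n}\rightarrow(R/T)^{n-1}$ has infinite kernel; lifting a sufficiently large finite set of kernel elements and using that the fiber over $0$ of each lift meets $R^{n}$, one should be able to produce infinitely many elements of $K$ — but here one must be careful, since a lift of a kernel element of $\bar f$ need not lie in $K$. So the cleaner route is the complementary one: show that if $K$ is finite then $R/T$ must be finite, and then derive a contradiction from $T$ being ``small'' in a size-balanced sense.

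The heart of the argument, then, is the case where $R/T$ is finite, i.e. $R$ is ``almost torsion'' — every element of $R$ becomes torsion modulo a finite ideal. Here I would exploit size-balancedness together with noetherianity as follows. Since $K$ is finite, for each of the finitely many generators $k_{1},\dots,k_{s}$ of $K$ (as an $R$-module; in fact $K$ is literally a finite set so one can take all of $K$) the right ideals generated by the coordinate entries of the $k_{j}$ are finite, hence by size-balancedness the two-sided ideal $I$ they generate is finite. One then argues, mimicking the proof of Lemma~\ref{comm}, that the image of $f$ is ``controlled'' modulo $I$: concretely, writing $f$ via a matrix $A\in M_{(n-1)\times n}(R)$ acting on column vectors, the relation coming from the classical adjugate/Cramer-type identities forces certain $R$-linear combinations of the rows of $A$ to annihilate, up to $I$, a free rank-one quotient, and the finiteness of $I$ against the infinitude of $R$ (via a pigeonhole on cosets of $I$) yields a new nonzero element of $K$ lying outside the finite set $K$ — a contradiction. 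The size-balanced hypothesis is exactly what lets the finiteness of the ``obvious'' right ideals propagate to a genuinely two-sided, hence $R$-bi-stable, finite ideal $I$ modulo which the pigeonhole can be run symmetrically.

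I expect the main obstacle to be making the Cramer-type / pigeonhole step work in the noncommutative setting with $n-1$ equations and $n$ unknowns rather than just $n=2$. In the commutative $n=2$ case (Lemma~\ref{comm}) one uses the single explicit kernel element $(-b,a)$ obtained from the $2\times 2$ adjugate; for general $n$ one needs, from the $(n-1)\times n$ matrix $A$, to manufacture a nonzero solution whose entries are (up to the finite ideal $I$) the signed maximal minors of $A$, and then to run the coset-counting argument on one well-chosen coordinate. Verifying that not all these minors lie in $I$ — or handling the degenerate case where they do, by descending to a smaller system — is where the real care is needed, and is presumably where the noetherian hypothesis is used a second time (to guarantee the descent terminates). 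Once that nonzero solution is in hand, the contradiction with finiteness of $K$ follows by the same pigeonhole on $R/I$ used in Lemma~\ref{comm}, now applied coordinatewise.
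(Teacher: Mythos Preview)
Your proposal has two genuine gaps. First, the $\mathbb{Z}$-torsion dichotomy never closes: you acknowledge that lifts from $\ker\bar f$ over $R/T$ need not land in $K$, then pivot to ``$K$ finite $\Rightarrow$ $R/T$ finite'' without any argument, and this implication is neither obvious nor, as it turns out, needed. Second, and fatally, the Cramer/adjugate mechanism you invoke for general $n$ simply does not exist over a noncommutative ring---there are no maximal minors satisfying Laplace-type identities, so the step ``manufacture a nonzero solution whose entries are the signed maximal minors of $A$'' has no content here. This is not a matter of extra care; the tool is absent, and the ``descent on degenerate minors'' you hope noetherianity will terminate cannot even be formulated.

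The paper's proof bypasses both issues by \emph{iterating} the one step you do carry out correctly. From the finite two-sided ideal $I$ generated by the coordinates of the elements of $K$ (this is exactly where size-balancedness enters), pass to the infinite noetherian quotient $R/I$ and the induced map $\bar f:(R/I)^{n}\to(R/I)^{n-1}$. Its kernel is nonzero by the strong rank condition; a pigeonhole on the finite set $I^{n-1}$ shows that if $\ker\bar f$ were infinite, two distinct elements of $\ker\bar f$ would have lifts $x,y\in R^{n}$ with $Ax=Ay$, so $x-y\in K$, hence every coordinate of $x-y$ lies in $I$, contradicting distinctness in $(R/I)^{n}$. Thus $\ker\bar f$ is finite and nonzero, and repeating the construction yields a strictly larger finite two-sided ideal $I_{1}\supsetneq I$. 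Iterating produces an infinite strictly ascending chain $I\subsetneq I_{1}\subsetneq I_{2}\subsetneq\cdots$, contradicting noetherianity. No determinantal identity is ever used; noetherianity enters once for the strong rank condition and once to forbid the ascending chain.
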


\begin{proof}
Let $f:R^{n}\rightarrow R^{n-1}$ be any $R$-homomorphism. Let $%
A=(a_{ij})_{(n-1)\times n}$ be the matrix representation of $f$ with respect
to the standard basis. Denote by
\begin{equation*}
I^{\prime }=\langle x_{1}R+x_{2}R+\cdots +x_{n}R\mid (x_{1},x_{2},\cdots
,x_{n})\in \ker f\rangle \unlhd R.
\end{equation*}%
First we notice that $I^{\prime }$ is non-trivial by the strong rank
condition of noetherian rings (cf. Theorem 3.15 of \cite{la}). Suppose that $%
\ker f$ is finite. For any
\begin{equation*}
(x_{1},x_{2},\cdots ,x_{n})\in \ker f
\end{equation*}
and $r\in R$, each $(x_{1}r,x_{2}r,\cdots ,x_{n}r)\in \ker f$. As $\ker f$
is finite, each right ideal $x_{i}R$ is finite; and hence so is $I^{\prime }$%
. Let $I$ be the two sided ideal generated by the finite right ideal $%
I^{\prime }$. It is finite as $R$ is assumed to be size-balanced. Therefore,
the quotient ring $R/I$ is infinite and noetherian.

Let $\bar{f}:R/I\rightarrow R/I$ be the $R/I$-homomorphism induced by the
matrix $\bar{A}=(\bar{a}_{ij}),$ where $\bar{a}_{ij}$ is the image of $%
a_{ij}.$ If $(\bar{x}_{1},\bar{x}_{2},\cdots ,\bar{x}_{n})\in \ker \bar{f}$
and $x_{i}$ is any pre-image of $\bar{x}_{i}$, we have%
\begin{equation*}
A(x_{1},x_{2},\cdots ,x_{n})^{T}\in I^{n-1}.
\end{equation*}%
As $I$ is finite, so is $I^{n-1}$. If $\ker \bar{f}$ is infinite, there are
two distinct elements in $\ker \bar{f}$ with pre-image $(x_{1},x_{2},\cdots
,x_{n})$ and $(y_{1},y_{2},\cdots ,y_{n})$ in $R^{n}$ such that%
\begin{equation*}
A(x_{1},x_{2},\cdots ,x_{n})^{T}=A(y_{1},y_{2},\cdots ,y_{n})^{T}\in I^{n-1}.
\end{equation*}%
However, this implies that
\begin{equation*}
(x_{1},x_{2},\cdots ,x_{n})-(y_{1},y_{2},\cdots ,y_{n})\in \ker f.
\end{equation*}%
We have a contradiction as $(x_{1},x_{2},\cdots ,x_{n})$ and $%
(y_{1},y_{2},\cdots ,y_{n})$ are distinct in $(R/I)^{n}$. Therefore, $\ker
\bar{f}$ is finite. Moreover, $\ker \bar{f}$ is non-trivial by the strong
rank condition of noetherian rings. Let $I_{1}^{\prime }$ be the pre-image
of the right ideal generated by components of elements in $\ker \bar{f}$ in $%
R$, which is a finite right ideal by a similar argument as above. It
generates a finite two sided ideal $I_{1}$ of $R,$ and it properly contains $%
I.$

Repeating the argument, we get an infinite ascending sequence
\begin{equation*}
I\lneqq I_{1}\lneqq I_{2}\lneqq \cdots
\end{equation*}
of finite ideals of $R$. This is a contradiction to the assumption that $R$
is noetherian.
\end{proof}

\begin{corollary}
\label{coro-acrr} Any commutative ring $R$ containing an infinite noetherian
subring is $n$-rigid for each $n$.
\end{corollary}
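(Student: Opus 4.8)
The plan is to reduce Corollary \ref{coro-acrr} to Proposition \ref{prop-asin} by a localization-to-the-subring argument, together with the observation from Section \ref{sec} that ``$n$-rigidity for a given $n$'' only needs the kernel of one map $R^n \to R^{n-1}$ to be infinite. So let $R$ be a commutative ring containing an infinite noetherian subring $S$, fix $n$, and let $f\colon R^n \to R^{n-1}$ be an arbitrary $R$-homomorphism given by a matrix $A=(a_{ij})$ with $a_{ij}\in R$. I want to produce infinitely many solutions of $A x = 0$ over $R$.

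The first step is to descend the coefficients to a finitely generated subring. Let $S' = S[\,a_{ij} : 1\le i\le n-1,\ 1\le j\le n\,] \subseteq R$ be the subring of $R$ generated by $S$ and all the entries of $A$. Since $S$ is noetherian and $S'$ is a finitely generated $S$-algebra, the Hilbert basis theorem gives that $S'$ is noetherian; moreover $S'$ is infinite because it contains the infinite ring $S$, and it is commutative hence size-balanced. Now $A$ has all its entries in $S'$, so it defines an $S'$-homomorphism $f'\colon (S')^n \to (S')^{n-1}$. By Proposition \ref{prop-asin}, $S'$ is $n$-rigid, so $\ker f'$ is infinite. The final step is simply to observe that any element of $\ker f'$, viewed inside $R^n$ via the inclusion $S' \hookrightarrow R$, lies in $\ker f$ (the defining equations $\sum_i a_{ij}x_i = 0$ are the same equations, just now read in the larger ring), and this inclusion $(S')^n \hookrightarrow R^n$ is injective. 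Hence $\ker f \supseteq \ker f'$ is infinite. Since $f$ was arbitrary, $R$ is $n$-rigid, and since $n$ was arbitrary, $R$ is $n$-rigid for each $n$.

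The only point that needs a word of care — and the one I'd expect a referee to flag — is the verification that $S'$ is genuinely infinite and noetherian: infiniteness is immediate from $S \subseteq S'$, and noetherianity is exactly the Hilbert basis theorem applied to the finitely many generators $a_{ij}$ over the noetherian base $S$. Everything else is formal: ``size-balanced'' is automatic for commutative rings (stated right after the definition), and the compatibility of kernels under the ring extension $S' \subseteq R$ is just the observation that a linear system with coefficients in $S'$ has the same solution set when solved in $S'$ or restricted from solutions in $R$ lying in the subring — more precisely, that $\ker f' = \ker f \cap (S')^n$, so in particular $\ker f' \subseteq \ker f$. There is no serious obstacle; the corollary is a routine ``spreading out / Noetherian approximation'' consequence of the main proposition, and the entire proof is three or four lines.
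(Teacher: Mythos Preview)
Your proof is correct and is essentially identical to the paper's own argument: the paper also adjoins the finitely many matrix entries to the given infinite noetherian subring, invokes the Hilbert basis theorem to get an infinite commutative noetherian subring $R'$, applies Proposition~\ref{prop-asin} there, and then reads the infinitely many solutions back in $R$. The only cosmetic difference is that the paper phrases things in terms of the associated linear system rather than the homomorphism $f$, but this is exactly the translation spelled out after Definition~\ref{de}.
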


\begin{proof}
Let $R_{0}$ be an infinite noetherian subring of $R$. Let
\begin{equation*}
S:\sum_{1\leq i\leq n}a_{ij}x_{i}=0,1\leq j\leq m
\end{equation*}%
be a system of linear equations with $a_{ij}\in R.$ Form the infinite
commutative subring $R^{\prime }$ of $R$: $R^{\prime }=R_{0}[a_{ij},1\leq
i\leq n,1\leq j\leq m].$ By the Hilbert basis theorem, $R^{\prime }$ is
infinite noetherian. Proposition \ref{prop-asin} asserts that the system $S$
has infinitely many solutions in $R^{\prime }$, and hence in $R$.
\end{proof}

\begin{example}
\label{eg}Let $G$ be a polycyclic-by-finite group and $R=\mathbb{Z}[G]$ be
its integral group ring. It is known that $R$ is infinite noetherian (see
\cite{Jat74}). Moreover, $R$ is size-balanced by the trivial reason that
there are no non-trivial finite right ideals. According to Proposition \ref%
{prop-asin}, the ring $R$ is $n$-rigid for any $n$.
\end{example}

\begin{example}
Let $F$ be a nonabelian free group and $\mathbb{Z[}F]$ the group ring. Since
$\mathbb{Z[}F]$ does not satisfy the strong rank condition (cf. \cite{la},
Exercise 29, p.21.), the ring $\mathbb{Z[}F]$ is not $n$-rigid for any $%
n\geq 2.$
\end{example}

\section{Proofs}

Let
\begin{equation*}
Q=\Big\{%
\begin{pmatrix}
1 & x \\
0 & A%
\end{pmatrix}%
\mid x\in R^{n-1},A\in \mathrm{GL}_{n-1}(R),%
\begin{pmatrix}
1 & 0 \\
0 & A%
\end{pmatrix}%
\in E_{n}(R)\}.
\end{equation*}%
It is straightforward that $Q$ contains the normal subgroup
\begin{equation*}
S=\Big\{%
\begin{pmatrix}
1 & x \\
0 & I_{n-1}%
\end{pmatrix}%
\mid x\in R^{n-1}\},
\end{equation*}%
an abelian group. Therefore, all the $L^{2}$-Betti numbers of $S$ and $Q$
are zero when the ring $R$ is infinite.

\begin{lemma}
\label{ke}Let $k<n$ $(n\geq 3)$ be two positive integers. Suppose that $R$
is an infinite $k$-rigid ring. The subgroup $Q$ is $(k-1)$-step $s$-normal
in $E_{n}(R).$ In particular, $Q$ is $s$-normal if $R$ is infinite $2$-rigid.
\end{lemma}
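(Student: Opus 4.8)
The plan is to produce, for every $k$-tuple $\omega=(g_0,\dots,g_{k-1})\in E_n(R)^k$, an explicit infinite abelian subgroup contained in $Q^\omega=\bigcap_{i=0}^{k-1}g_iQg_i^{-1}$. Since conjugating the whole tuple by $g_0^{-1}$ changes $Q^\omega$ only up to conjugacy in $E_n(R)$, I would first reduce to $g_0=1$, so that only $Q$ itself together with the $k-1$ conjugates $g_1Qg_1^{-1},\dots,g_{k-1}Qg_{k-1}^{-1}$ remain in play; set $u_i=g_ie_1\in R^n$ for $1\le i\le k-1$ (for $k=1$ there are no such $u_i$ and the argument degenerates to the trivial assertion that $Q\supseteq S$ is infinite). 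All test elements will be taken inside $S$: for $x\in R^{n-1}$ write $\tilde x=(0,x)\in R^n$ and put
\[
M_x:=I_n+e_1\tilde x^{\,T}=\begin{pmatrix}1&x^{\,T}\\0&I_{n-1}\end{pmatrix}\in S\subseteq Q,
\]
and I would restrict $x$ to the additive subgroup $L:=\{\,x\in R^{n-1}\mid \tilde x^{\,T}u_i=0,\ 1\le i\le k-1\,\}$.

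The subgroup $L$ is chosen to do two things simultaneously. First, $L$ is cut out inside $R^{n-1}$ by the $k-1$ linear conditions $\tilde x^{\,T}u_i=0$; since $k<n$ forces $n-1\ge k$, Proposition~\ref{prop} applied to the $k$-rigid ring $R$ shows $L$ is infinite, and as $x\mapsto M_x$ is an injective homomorphism $(L,+)\hookrightarrow E_n(R)$ this already provides the infinitude we are after. Second, the relations defining $L$ are precisely what pushes $M_x$ into each of the conjugates $g_iQg_i^{-1}$. Indeed $g_i^{-1}M_xg_i=I_n+w_i\,(\tilde x^{\,T}g_i)$ with $w_i=g_i^{-1}e_1$, and the relation $\tilde x^{\,T}u_i=0$ says exactly that the row vector $\tilde x^{\,T}g_i$ annihilates $e_1$; hence $g_i^{-1}M_xg_i$ has first column $e_1$ and first row beginning with $1$, i.e.\ it has the block-triangular form with lower-right block some $A_i\in\mathrm{GL}_{n-1}(R)$ that the definition of $Q$ asks for.

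The only genuinely computational point, and the one I expect to be the main obstacle, is the second half of membership in $Q$: that $\mathrm{diag}(1,A_i)\in E_n(R)$. One checks that $\mathrm{diag}(1,A_i)=I_n+p_iq_i^{\,T}$, where $p_i\in R^n$ is the projection of $w_i$ onto the span of $e_2,\dots,e_n$ and $q_i=(\tilde x^{\,T}g_i)^{T}$ (which already has vanishing first coordinate when $x\in L$), so that $e_1^{\,T}p_i=q_i^{\,T}e_1=0$; a short computation using the defining relations of $L$ together with $\tilde x_1=0$ then gives the last relation $q_i^{\,T}p_i=\tilde x^{\,T}e_1=0$. The crux is the commutator identity
\[
\bigl[\,I_n+p_ie_1^{\,T},\ I_n+e_1q_i^{\,T}\,\bigr]=I_n+p_iq_i^{\,T},
\]
valid whenever $e_1^{\,T}p_i=q_i^{\,T}e_1=q_i^{\,T}p_i=0$: its left-hand side is the commutator of a product of elementary matrices in the first column with a product of elementary matrices in the first row, hence lies in $E_n(R)$, and therefore so does $\mathrm{diag}(1,A_i)$. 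This is the step in which the strict inequality $k<n$ really matters: the coordinate $e_1$ is an extra direction, not among the $k-1$ directions $u_i$ used to define $L$, and it is what makes the commutator trick available; a cruder attempt to force a single transvection $I_n+v\chi$ into every $g_iQg_i^{-1}$ would instead require $\chi$ to vanish on $k$ vectors, i.e.\ $(k+1)$-rigidity. Combining the two halves, $\{M_x\mid x\in L\}$ is an infinite subgroup of $Q^\omega$, so $Q$ is $(k-1)$-step $s$-normal in $E_n(R)$; taking $k=2$ gives the stated $s$-normality of $Q$ when $R$ is $2$-rigid.
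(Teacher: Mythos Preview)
Your proof is correct and follows essentially the same route as the paper: you produce the same infinite family of elements $M_x\in S$ (the paper writes them as $T_\phi$ for $\phi\in\mathrm{Hom}_R(U,R)$), cut out by the same $k-1$ linear conditions on the projections of the $g_ie_1$, and invoke Proposition~\ref{prop} for their infinitude.

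The one place where you work harder than necessary is the ``second half of membership in $Q$''. The paper simply notes that $Q$ coincides with the stabilizer of $e_1$ in $E_n(R)$: if $g\in E_n(R)$ has first column $e_1$, then writing $g=\begin{pmatrix}1&y\\0&A\end{pmatrix}$ one has
\[
\mathrm{diag}(1,A)=\begin{pmatrix}1&-y\\0&I_{n-1}\end{pmatrix}g\in E_n(R)
\]
automatically, since the first factor is a product of elementary matrices in the first row. Since $g_i^{-1}M_xg_i$ lies in $E_n(R)$ (the group is closed under conjugation) and fixes $e_1$, it therefore lies in $Q$ with no further computation. Your commutator identity $[I_n+p_ie_1^{\,T},\,I_n+e_1q_i^{\,T}]=I_n+p_iq_i^{\,T}$ is correct, but it is re-deriving this fact by hand in the particular case at hand; the ``main obstacle'' you flag is in fact no obstacle at all.
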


\begin{proof}
Without loss of generality, we assume $k=n-1.$ Let $g_{1},g_{2},\cdots
,g_{n-2}\ $be any $n-2$ elements in $E_{n}(R).$ We will show that the
intersection $Q\cap _{i=1}^{n-2}g_{i}Qg_{i}^{-1}$ is infinite, which implies
the $(k-1)$-step $s$-normality of $H$. Let $\{e_{i}\}_{i=1}^{n}$ be the
standard basis of $R^{n}.$ Denote by $U=R^{n-1}$ the $R$-submodule spanned
by $\{e_{i}\}_{i=2}^{n}$ and $p:R^{n}\rightarrow U$ the natural projection.

For each $g_{i},i=1,2,\cdots ,n-2,$ suppose that
\begin{equation*}
g_{i}e_{1}=x_{i}e_{1}+u_{i}
\end{equation*}%
for $x_{i}\in R$ and $u_{i}\in U.$ Denote
\begin{equation*}
\Phi =\{\phi \in \mathrm{Hom}_{R}(U,R)\mid \phi (u_{i})=0,i=1,2,\cdots
,n-2\}.
\end{equation*}

For any $\phi \in \Phi ,$ let $T_{\phi }:R^{n}\rightarrow R^{n}$ defined by $%
T_{\phi }(v)=v+\phi \circ p(v)e_{1}.$ It is obvious that
\begin{equation*}
g_{i}^{-1}T_{\phi }g_{i}(e_{1})=e_{1}
\end{equation*}%
for each $i=1,2,\cdots ,n-2.$ Note that $Q$ is the stabilizer of $e_{1}$.
This shows that for each $\phi \in \Phi ,$ the transformation $T_{\phi }$
lies in $Q\cap _{i=1}^{n-2}g_{i}Qg_{i}^{-1}.$ Denote by $T$ the subgroup
\begin{equation*}
T=\{T_{\phi }\mid \phi \in \Phi \}.
\end{equation*}%
By Proposition \ref{prop}, $\Phi $ is infinite and thus $T$ is infinite. The
proof is finished.
\end{proof}

\begin{lemma}
\label{new}In the proof of Lemma \ref{ke}, the subgroup $T$ is normal in $%
Q\cap _{i=1}^{n-2}g_{i}Qg_{i}^{-1}.$
\end{lemma}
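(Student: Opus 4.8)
The plan is to unwind the explicit description of $T$ given in the proof of Lemma~\ref{ke} and check the normality directly. Recall $T = \{T_\phi \mid \phi \in \Phi\}$ where $T_\phi(v) = v + \phi(p(v))e_1$, and $\Phi = \{\phi \in \mathrm{Hom}_R(U,R) \mid \phi(u_i) = 0, i = 1,\dots,n-2\}$. First I would record that $T$ is a subgroup isomorphic to the additive group $\Phi$: since $p(e_1) = 0$, we have $T_\phi(e_1) = e_1$, so $T_\phi \circ T_\psi(v) = T_\psi(v) + \phi(p(v))e_1 = v + (\phi+\psi)(p(v))e_1 = T_{\phi+\psi}(v)$, which also gives $T_\phi^{-1} = T_{-\phi}$. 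In matrix terms, with respect to the decomposition $R^n = Re_1 \oplus U$, the map $T_\phi$ is the block upper-triangular matrix $\left(\begin{smallmatrix} 1 & \phi \\ 0 & I_{n-1}\end{smallmatrix}\right)$, so $T$ is a subgroup of the abelian group $S$ appearing before Lemma~\ref{ke}.

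Next I would take an arbitrary element $h \in Q \cap \bigcap_{i=1}^{n-2} g_i Q g_i^{-1}$ and compute $h T_\phi h^{-1}$. The key facts are that every element of $Q$ fixes $e_1$ (that is, $Q$ is the stabilizer of $e_1$) and preserves the submodule $U$, since $Q$ consists of matrices of the form $\left(\begin{smallmatrix} 1 & x \\ 0 & A\end{smallmatrix}\right)$. Concretely, write $h = \left(\begin{smallmatrix} 1 & x \\ 0 & A\end{smallmatrix}\right)$ with $A \in \mathrm{GL}_{n-1}(R)$ and $x \in U$. A direct block computation gives
\begin{equation*}
h T_\phi h^{-1} = \begin{pmatrix} 1 & x \\ 0 & A\end{pmatrix}\begin{pmatrix} 1 & \phi \\ 0 & I\end{pmatrix}\begin{pmatrix} 1 & -xA^{-1} \\ 0 & A^{-1}\end{pmatrix} = \begin{pmatrix} 1 & \phi A^{-1} \\ 0 & I\end{pmatrix} = T_{\phi \circ A^{-1}},
\end{equation*}
where I am identifying $\phi$ with the row vector representing it and $\phi \circ A^{-1} \in \mathrm{Hom}_R(U,R)$ denotes the composite $v \mapsto \phi(A^{-1}v)$. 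Thus normality reduces to the single claim that $\phi \circ A^{-1} \in \Phi$ whenever $\phi \in \Phi$, i.e.\ that $(\phi \circ A^{-1})(u_i) = 0$ for all $i$.

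To verify this last point I would use that $h \in g_i Q g_i^{-1}$ for each $i$, equivalently $g_i^{-1} h g_i \in Q$, equivalently $g_i^{-1} h g_i$ fixes $e_1$, i.e.\ $h(g_i e_1) = g_i e_1$. Writing $g_i e_1 = x_i e_1 + u_i$ as in the proof of Lemma~\ref{ke} and applying $h = \left(\begin{smallmatrix} 1 & x \\ 0 & A\end{smallmatrix}\right)$, the condition $h(g_i e_1) = g_i e_1$ becomes $x_i e_1 + x(u_i) e_1 + A u_i = x_i e_1 + u_i$ (reading $x \in U^* $ as the functional $v \mapsto x(v)$), which forces $A u_i = u_i$, hence $A^{-1} u_i = u_i$. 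Therefore $(\phi \circ A^{-1})(u_i) = \phi(A^{-1} u_i) = \phi(u_i) = 0$, so $\phi \circ A^{-1} \in \Phi$ and $h T_\phi h^{-1} \in T$, proving $T \trianglelefteq Q \cap \bigcap_{i=1}^{n-2} g_i Q g_i^{-1}$. The only mild subtlety — the step I would be most careful about — is the bookkeeping between the abstract functionals $\phi, x$ and their matrix/row-vector incarnations and the precise block form of elements of $Q$ (in particular that $A$ really does fix each $u_i$); everything else is a routine conjugation computation. This lemma then feeds into the main argument since, combined with Lemma~\ref{lem2}(ii), the vanishing of the $L^2$-Betti numbers of the abelian group $T$ propagates to $Q \cap \bigcap_i g_i Q g_i^{-1}$, as needed to apply Lemma~\ref{lem1}.
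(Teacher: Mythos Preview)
Your proof is correct and follows essentially the same route as the paper's: both write $T_\phi$ in block form, compute the conjugate by an element $h=\left(\begin{smallmatrix}1&x\\0&A\end{smallmatrix}\right)$ of the intersection, and then use $h(g_ie_1)=g_ie_1$ to deduce $Au_i=u_i$, whence the resulting functional still kills every $u_i$. The only cosmetic difference is that the paper conjugates in the other direction (computing $h^{-1}T_\phi h$ and obtaining $e_\phi A$ rather than $\phi A^{-1}$), which is immaterial.
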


\begin{proof}
For any $\phi ,$ write $e_{\phi }=(\phi (e_{2}),\cdots ,\phi (e_{n})).$ With
respect to the standard basis, the representation matrix of the
transformation $T_{\phi }$ is $%
\begin{pmatrix}
1 & e_{\phi } \\
0 & I_{n-1}%
\end{pmatrix}%
.$ For any $%
\begin{pmatrix}
1 & x \\
0 & A%
\end{pmatrix}%
\in Q\cap _{i=1}^{n-2}g_{i}Qg_{i}^{-1},$ the conjugate
\begin{equation*}
\begin{pmatrix}
1 & x \\
0 & A%
\end{pmatrix}%
^{-1}%
\begin{pmatrix}
1 & e_{\phi } \\
0 & I_{n-1}%
\end{pmatrix}%
\begin{pmatrix}
1 & x \\
0 & A%
\end{pmatrix}%
=%
\begin{pmatrix}
1 & e_{\phi }A \\
0 & I_{n-1}%
\end{pmatrix}%
.
\end{equation*}%
Define $\psi :U=R^{n-1}\rightarrow R$ by $\psi (x)=e_{\phi }Ax.$ For each $%
i=1,\cdots ,n-2,$ we have that $%
\begin{pmatrix}
1 & x \\
0 & A%
\end{pmatrix}%
=g_{i}q_{i}g_{i}^{-1}$ for some $q_{i}\in Q.$ Therefore,
\begin{equation*}
\begin{pmatrix}
1 & x \\
0 & A%
\end{pmatrix}%
g_{i}e_{1}=g_{i}q_{i}e_{1}
\end{equation*}%
and $Au_{i}=u_{i}.$ This implies that $\psi (u_{i})=e_{\phi }u_{i}=0$ for
each $i$ and thus $\psi \in \Phi .$ Therefore, the conjugate $%
\begin{pmatrix}
1 & e_{\phi }A \\
0 & I_{n-1}%
\end{pmatrix}%
$ lies in $T,$ which proves that $T$ is normal.
\end{proof}

\bigskip

\begin{proof}[Proof of Theorem \protect\ref{main}]
By Lemma \ref{new}, any intersection $Q\cap _{i=1}^{n-2}g_{i}Qg_{i}^{-1}$
contains an infinite normal amenable subgroup $T.$ Therefore, all the $L^{2}$%
-Betti numbers of any intersection $Q\cap _{i=1}^{k}g_{i}Qg_{i}^{-1}$ are
vanishing for $k\leq n-2$ considering Lemma \ref{subn}. We have that $%
b_{i}(E_{n}(R))=0$ for any $0\leq i\leq n-2$ by Lemma \ref{lem1}.
\end{proof}

\bigskip

\begin{proof}[Proof of Corollary \protect\ref{cor}]
When $n=2,$ it is clear that both $\mathrm{GL}_{n}(R)$ and $\mathrm{SL}%
_{n}(R)$ are infinite, since $E_{2}(R)$ is an infinite subgroup. Thus $%
b_{0}^{(2)}(\mathrm{GL}_{2}(R))=b_{0}^{(2)}(\mathrm{SL}_{2}(R))=0.$ We have
already proved that $b_{i}^{(2)}(E_{n}(S))=0$ for infinite commutative
noetherian ring $S$ and $0\leq i\leq n-2,$ since the ring $S$ would be $k$%
-rigid for any integer $k$ by Proposition \ref{prop-asin}. If $S$ is a
finite subring of $R$, the group $E_{n}(S)$ is also finite. Therefore, we
still have $b_{i}^{(2)}(E_{n}(S))=0$ for $1\leq i\leq n-2.$ Note that every
commutative ring $R$ is the directed colimit of its subrings $S$ that are
finitely generated as $\mathbb{Z}$-algebras (noetherian rings by Hilbert
basis theorem). Since the group $E_{n}(R)$ is the union of the directed
system of subgroups $E_{n}(S),$ we get that
\begin{equation*}
b_{i}^{(2)}(E_{n}(R))=0
\end{equation*}%
for $0\leq i\leq n-2$ (cf. \cite{lu}, Theorem 7.2 (3) and its proof)$.$ When
$R$ is commutative and $n\geq 3$, a result of Suslin says that the group $%
E_{n}(R)$ is a normal subgroup of $\mathrm{GL}_{n}(R)$ and $\mathrm{SL}%
_{n}(R)$ (cf. \cite{Su}). Lemma \ref{lem2} implies that $b_{i}^{(2)}(\mathrm{%
GL}_{n}(R))=b_{i}^{(2)}(\mathrm{SL}_{n}(R))=0$ for each $i\in \{0,\ldots
,n-2\}.$
\end{proof}

\bigskip

We follow \cite{bak} to define the elementary subgroups of symplectic groups
and orthogonal groups. Let $E_{ij}$ denote the $n\times n$ matrix with $1$
in the $(i,j)$-th position and zeros elsewhere. Then for $i\neq j,$ the
matrix $e_{ij}(a)=I_{n}+aE_{ij}$ is an elementary matrix, where $I_{n}$ is
the identity matrix of size $n$. With $n$ fixed, for any integer $1\leq
k\leq 2n,$ set $\sigma k=k+n$ if $k\leq n$ and $\sigma k=k-n$ if $k>n$. For $%
a\in R$ and $1\leq i\neq j\leq 2n,$ we define the elementary unitary
matrices $\rho _{i,\sigma i}(a)$ and $\rho _{ij}(a)$ with $j\neq \sigma i$
as follows:

\begin{itemize}
\item $\rho _{i,\sigma i}(a)=I_{2n}+aE_{i,\sigma i}$ with $a\in R$;

\item Fix $\varepsilon =\pm 1.$ We define $\rho _{ij}(a)=\rho _{\sigma
j,\sigma i}(-a^{\prime })=I_{2n}+aE_{ij}-a^{\prime }E_{\sigma j,\sigma i}$
with $a^{\prime }=a$ when $i,j\leq n$; $a^{\prime }={\varepsilon a}$ when $%
i\leq n<j$; $a^{\prime }=a\varepsilon $ when $j\leq n<i$; and $a^{\prime }=a$
when $n+1\leq i,j$.
\end{itemize}

When $\varepsilon =-1,$ we have the elementary symplectic group
\begin{equation*}
\mathrm{ESp}_{2n}(R)=\langle \rho _{i,\sigma i}(a),\rho _{ij}(a)\mid a\in
R,i\neq j,i\neq \sigma j\rangle .
\end{equation*}%
When $\varepsilon =1,$ we have the elementary orthogonal group%
\begin{equation*}
\mathrm{EO}(n,n)(R)=\langle \rho _{ij}(a)\mid a\in R,i\neq j,i\neq \sigma
j\rangle .
\end{equation*}%
Note that for the orthogonal group, each matrix $\rho _{i,\sigma i}(a)$ is
not in $\mathrm{EO}(n,n)(R).$

There is an obvious embedding
\begin{equation*}
\mathrm{Sp}_{2n}(R)\rightarrow \mathrm{Sp}_{2n+2}(R),
\end{equation*}%
\begin{equation*}
\left(
\begin{array}{ll}
\alpha  & \beta  \\
\gamma  & \delta
\end{array}%
\right) \longmapsto \left(
\begin{array}{llll}
1 & 0 & 0 & 0 \\
0 & \alpha  & 0 & \beta  \\
0 & 0 & 1 & 0 \\
0 & \gamma  & 0 & \delta
\end{array}%
\right) .\
\end{equation*}%
Denote the image of $A\in \mathrm{Sp}_{2n}(R)$ by $I\bigoplus A\in \mathrm{Sp%
}_{2n+2}(R).$ Let
\begin{equation*}
Q_{1}=\langle (I\oplus A)\cdot \Pi _{i=1}^{2n}\rho _{1i}(a_{i})\mid
a_{i}\in R,A\in \mathrm{Sp}_{2n-2}(R),I\oplus A\in \mathrm{ESp}%
_{2n}(R)\rangle ;
\end{equation*}%
and
\begin{equation*}
S_{1}=\langle \Pi _{i=1}^{2n}\rho _{1i}(a_{i})\mid a_{i}\in R\rangle .
\end{equation*}%
Similarly, we can define \bigskip
\begin{equation*}
\begin{split}
Q_{2}=\langle (I\oplus A)\cdot \Pi _{i=1,i\neq n+1}^{2n}\rho
_{1i}(a_{i})& \mid a_{i}\in R,A\in \mathrm{O}(2n-2,2n-2)(R), \\
I\oplus A& \in \mathrm{EO}(n,n)(R)\rangle ;
\end{split}%
\end{equation*}%
and
\begin{equation*}
S_{2}=\langle \Pi _{i=1,,i\neq n+1}^{2n}\rho _{1i}(a_{i})\mid a_{i}\in
R\rangle .
\end{equation*}

Since $S_{i}$ is abelian and normal in $Q_{i},$ all the $L^{2}$-Betti
numbers of $Q_{i}$ vanish for $i=1,2$.

\bigskip

\begin{proof}[Proof of Theorem \protect\ref{main2}]
We prove the theorem by induction on $n.$ When $n=2,$ both $\mathrm{Sp}%
_{2n}(R)$ and $\mathrm{O}(n,n)(R)$ are infinite and therefore we have $$%
b_{0}^{(2)}(\mathrm{Sp}_{4}(R))=b_{0}^{(2)}(\mathrm{O}(4,4)(R))=0.$$ The
subgroup $\mathrm{ESp}_{2n}(R)$ is normal in $\mathrm{Sp}_{2n}(R)$ when $%
n\geq 2$ and the subgroup $\mathrm{EO}(n,n)(R)$ is normal in $\mathrm{O}%
(n,n)(R)$ when $n\geq 3$ (cf. \cite{ba}, Cor. 3.10). It suffices to prove
the vanishing of Betti numbers for $G=\mathrm{ESp}_{2n}(R)$ and $\mathrm{EO}%
(n,n)(R).$

We check the condition of Lemma \ref{lem1} for $Q=Q_{1}$ (resp. $Q_{2}$) as
follows. Note that
\begin{equation*}
Q=\{g\in G\mid ge_{1}=e_{1}\}.
\end{equation*}%
Let $g_{1},g_{2},\cdots ,g_{k}$ $(g_{0}=I_{2n},k\leq n-2)$ be any $k$%
-elements in $G$ and $$K=\langle g_{0}e_{1},g_{1}e_{1},\cdots
,g_{k}e_{1}\rangle $$ the submodule in $R^{2n}$ generated by all $g_{i}e_{1}$%
. Recall that the symplectic (resp. orthogonal) form $\langle -,-\rangle
:R^{2n}\times R^{2n}\rightarrow R$ is defined by $\langle x,y\rangle
=x^{T}\varphi _{n}y$ (resp. $\langle x,y\rangle =x^{T}\psi _{n}y$). Denote
\begin{equation*}
C:=\{v\in R^{2n}\mid \langle v,g_{i}e_{1}\rangle =0\text{ for each }%
i=0,\cdots ,k-1\}.
\end{equation*}%
Let $\varepsilon =-1$ for $\mathrm{ESp}_{2n}(R)$ and $1$ for $\mathrm{EO}%
(n,n)(R)$. For each $r\in R,$ set $\delta _{\varepsilon }^{r}=r$ if $%
\varepsilon =-1$ and $\delta _{\varepsilon }^{r}=0$ if $\varepsilon =1.$ For
each $u,v\in C$ with $\langle u,u\rangle =$ $\langle u,v\rangle =\langle
v,v\rangle =0$, define the the transvections in $G$ (cf. \cite{va}, p.287,
Eichler transformations in \cite{M}, p.214, p.223-224)
\begin{eqnarray*}
\tau (u,v) &:&R^{2n}\rightarrow R^{2n}\text{ by }x\mapsto x+\varepsilon
u\langle v,x\rangle -v\langle u,x\rangle , \\
\tau _{v,r} &:&R^{2n}\rightarrow R^{2n}\text{ by }x\mapsto x-\delta
_{\varepsilon }^{r}v\langle v,x\rangle .
\end{eqnarray*}%
Note that $\tau _{v,r}$ is non-identity only in $\mathrm{ESp}_{2n}(R).$ We
have
\begin{equation*}
\tau (u,v)(g_{i}e_{1})=\tau _{v,r}(g_{i}e_{1})=g_{i}e_{1}\text{ }
\end{equation*}%
for each $i.$ Therefore, the transvections $\tau (u,v),\tau _{v,r}\in \cap
_{i=0}^{k}g_{i}Qg_{i}^{-1}.$ Let
\begin{equation*}
T=\langle \tau (u,v),\tau _{v,r}\mid u,v\in C,\langle u,u\rangle =\langle
u,v\rangle =\langle v,v\rangle =0,r\in R\rangle ,
\end{equation*}%
the subgroup generated by the transvections in $G.$ For any $g\in $ $\cap
_{i=0}^{k}g_{i}Qg_{i}^{-1},$ we have $gg_{i}e_{1}=g_{i}e_{1}$ and thus
\begin{equation*}
\langle gu,g_{i}e_{1}\rangle =\langle gu,gg_{i}e_{1}\rangle =\langle
u,g_{i}e_{1}\rangle =0.
\end{equation*}%
This implies that $g\tau (u,v)g^{-1}=\tau (gu,gv)\in T$ and $g\tau
_{v,r}g^{-1}=\tau _{gv,r}\in T.$ Therefore, the subgroup $T$ is a normal
subgroup in $\cap _{i=0}^{k}g_{i}Qg_{i}^{-1}.$

When $R$ is a PID, both the submodule $K$ and the complement $C$ are free of
smaller ranks.

\begin{description}
\item[Case (i)] $K\cap C=0.$

\item Since $R^{2n}=K\bigoplus C$ (note that each $g_{i}e_{1}$ is
unimodular), the symplectic (resp. orthogonal) form on $R^{2n}$ restricts to
a non-degenerate symplectic (resp. orthogonal) form on $C.$ Let $T<G$ as
defined before. It is known that the transvections generate the elementary
subgroups (cf. \cite{M}, p.223-224) and thus $T\cong \mathrm{ESp}_{2m}(R)$
(resp. $\mathrm{EO}(m,m)(R)$) for $m=\mathrm{rank}(C)\leq n-2.$ Since $k\leq
n-2,$ we have $m\geq 4.$ By induction,
\begin{equation*}
b_{s}^{(2)}(\cap _{i=0}^{k}g_{i}Qg_{i}^{-1})=b_{s}^{(2)}(T)=0
\end{equation*}%
for $s\leq \mathrm{rank}(C)/2-2.$ When $s+k\leq n-2,$ we have that $s\leq
\mathrm{rank}(C)/2-2,$ since $\mathrm{rank}(C)\geq 2n-(k+1).$ Therefore, $%
b_{s}^{(2)}(\cap _{i=0}^{k}g_{i}Q_{1}g_{i}^{-1})=0$ and Lemma \ref{lem1}
implies that
\begin{equation*}
b_{i}^{(2)}(G)=0
\end{equation*}%
for any $i\leq n-2.$

\item[Case (ii)] $K\cap C\neq 0.$

\item For any $u,v\in K\cap C$ and any $g\in \cap _{i=0}^{k}g_{i}Qg_{i}^{-1},
$ we have that $gu=u,gv=v$ and
\begin{equation*}
g\tau (u,v)g^{-1}=\tau (gu,gv)=\tau (u,v).
\end{equation*}%
This implies that $\tau (u,v)$ lies in the center of $\cap
_{i=0}^{k}g_{i}Qg_{i}^{-1}.$ Note that when $G=\mathrm{ESp}_{2n}(R),$ the
transvection $\tau (u,u)$ is not trivial for any $u\in K\cap C$. When $G=%
\mathrm{EO}(n,n)(R)$ and $\mathrm{rank}(K\cap C)\geq 2,$ the transvection $%
\tau (u,v)$ is not trivial for any linearly independent $u,v\in K\cap C$.
Moreover, for two elements $r,s$ with $r^{2}\neq s^{2},$ we have $\tau
(ru,rv)\neq \tau (su,sv)$ when $\tau (u,v)\neq I_{2n}$ (take note that for $%
G=\mathrm{ESp}_{2n}(R)$, we can just let $u=v$ from above). The infinite PID
$R$ contains infinitely many square elements. In summary, as $K\cap C$ is a
free $R$-module, the subgroup
\begin{equation*}
T^{\prime }=\langle \tau (u,v)\mid v\in K\cap C\rangle <G
\end{equation*}%
is an infinite abelian normal subgroup of $\cap _{i=0}^{k}g_{i}Qg_{i}^{-1}.$
Therefore,%
\begin{equation*}
b_{s}^{(2)}(\cap _{i=0}^{k}g_{i}Qg_{i}^{-1})=b_{s}^{(2)}(T^{\prime })=0
\end{equation*}%
for each integer $s\geq 0$. Therefore, for any $i\leq n-2,$ we have that $%
b_{i}^{(2)}(G)=0$ by Lemma \ref{lem1}.

The remaining situation is that $G=\mathrm{EO}(n,n)(R)$ and $\mathrm{rank}%
(K\cap C)=1.$ Choose the decomposition $C=(K\cap C)\bigoplus C_{1}.$ The
orthogonal form restricts to a non-degenerate orthogonal form on $C_{1}$
(suppose that for some $x\in C_{1}$ we have $\langle x,y\rangle =0$ for any $%
y\in C_{1}$. Since $\langle x,k\rangle =0$ for any $k\in K,$ we know that $%
\langle x,y\rangle =0$ for any $y\in C.$ This implies $x\in K,$ which gives $%
x=0$). Since $k\leq n-2,$ the even number $\mathrm{rank}(C_{1})\geq 4.$ A
similar argument as case (i) finishes the proof.
\end{description}
\end{proof}

\begin{remark}
\label{nrm} Let $T$ be the normal subgroup of $\cap
_{i=0}^{k}g_{i}Qg_{i}^{-1}$ constructed in the proof of Theorem \ref{main2}.
We do not know whether the $L^{2}$-Betti numbers $b_{i}^{(2)}(T)=0$ for a
general infinite $(2n-1)$-rigid commutative ring $R$ when $i\leq n-2-k$. If
yes, Theorem \ref{main2} would hold for any general infinite commutative
ring by a similar argument as the proof of Corollary \ref{cor}.
\end{remark}

\begin{proof}[Proof of Theorem \protect\ref{main3}]
Note that when $R$ is commutative, the elementary subgroups $E_{n}(R),%
\mathrm{ESp}_{2n}(R)$ and $\mathrm{EO}(n,n)(R)$ are normal in $\mathrm{SL}%
_{n}(R),\mathrm{Sp}_{2n}(R)$ and $\mathrm{O}(n,n)(R),$ respectively (cf.
\cite{Su}, \cite{ba} Cor. 3.10). Therefore, it is enough to prove the
failure of acylindrically hyperbolicity for elementary subgroups. We prove
(i) first. If $R\ $is finite, all the groups will be finite and thus not
acylindrically hyperbolic. If $R$ is infinite, then it is $2$-rigid and the
subgroup $Q$ is $s$-normal by Lemma \ref{ke}. Suppose that $E_{n}(R)$ is
acylindrically hyperbolic. Lemma \ref{lem3} implies that both $Q$ and $S$
are acylindrically hyperbolic. However, the subgroup $S$ is infinite
abelian, which is a contradiction to the second part of Lemma \ref{lem3}.

For (ii), we may also assume that $R$ is infinite since any finite group is
not acylindrically hyperbolic. It suffices to prove that $Q_{1}$ (resp. $%
Q_{2}$) is $s$-normal in $\mathrm{ESp}_{2n}(R)$ (resp. $\mathrm{EO}(n,n)(R)$%
). (Note that $Q_{1}$ and $Q_{2}$ contain the infinite normal subgroups $%
S_{1}$ and $S_{2}$, respectively. If $G$ is acylindrically hyperbolic, the
infinite abelian subgroup $S_{1}$ or $S_{2}$ would be acylindrically
hyperbolic. This is a contradiction to the second part of Lemma \ref{lem3}.)
By definition, this is to prove that for any $g\in G,$ the intersection $%
Q\cap g^{-1}Qg$ is infinite for $Q=Q_{1}$ and $Q_{2}$. Denote by $%
ge_{1}=(x_{1},\cdots ,x_{n},y_{1},\cdots ,y_{n})^{T}.$ Let%
\begin{equation*}
t_{A}=\Pi _{1\leq i<j\leq n}\rho _{i,n+j}(a_{ij})=\left(
\begin{array}{ll}
I_{n} & A \\
0 & I_{n}%
\end{array}%
\right) \in G,
\end{equation*}%
where $A=(a_{ij})$ is an $n\times n$ matrices with entries in $R.$ Note that
$a_{ji}=a_{ij}$ or $-a_{ij}$ depending on $G=\mathrm{ESp}_{2n}(R)$ or $%
\mathrm{EO}(n,n)(R)$. Moreover, we have $\rho _{i,n+i}(a)\notin \mathrm{EO}%
(n,n)(R)$ and $\rho _{i,n+i}(a)\in \mathrm{ESp}_{2n}(R)$ for any $a\in R.$
Direct calculation shows that $t_{A}(ge_{1})-ge_{1}=((y_{1},\cdots
,y_{n})A^{T},0,\cdots ,0)^{T}.$ When $n\geq 4$ and $G=\mathrm{EO}(n,n)(R),$
the map $f:R^{\frac{n(n-1)}{2}}\rightarrow R^{n}$ defined by
\begin{equation*}
(a_{ij})_{1\leq i<j\leq n}\longmapsto A(y_{1},\cdots ,y_{n})^{T}
\end{equation*}%
has an infinite kernel $\ker f$ by $2$-rigidity of infinite commutative
rings. This implies that $\langle t_{A}\mid (a_{ij})_{1\leq i<j\leq n}\in
\ker f\rangle <Q\cap g^{-1}Qg$ is infinite. When $n\geq 2$ and $G=\mathrm{ESp%
}_{2n}(R),$ the map $f:R^{\frac{n(n+1)}{2}}\rightarrow R^{n}$ defined by $%
(a_{ij})_{1\leq i\leq j\leq n}\longmapsto A(y_{1},\cdots ,y_{n})^{T}$ has an
infinite kernel $\ker f$ and $Q\cap g^{-1}Qg$ is infinite by a similar
argument. The proof is finished.
\end{proof}

\bigskip

\begin{proof}[Proof of Corollary \protect\ref{cor3}]
\ By Proposition \ref{prop-asin}, Example \ref{eg} and Lemma \ref{exte}, all
these rings are $n$-rigid for any $n\geq 1.$ The corollary follows Theorem %
\ref{main} and Theorem \ref{main3}.
\end{proof}

\bigskip

\noindent \textbf{Acknowledgements}

The authors would like to thank the referee for pointing out a gap in a
previous version of this paper. The second author is supported by Jiangsu
Natural Science Foundation (No. BK20140402) and NSFC (No. 11501459).

\end{document}